\documentclass[a4paper,12pt]{article}
\usepackage{complexity}
\usepackage{lineno}
\usepackage{xcolor}
\usepackage[T1]{fontenc}
\usepackage{graphicx}
\usepackage{textgreek}
\usepackage{pstricks,pspicture}
\usepackage{float}
\usepackage{amsthm,amsmath,amssymb}
\usepackage{mathrsfs,amssymb,amsmath,amsthm,amstext,amsfonts}
\usepackage{pst-plot}
\usepackage{epsfig}
\usepackage{setspace}
\usepackage{float}
\usepackage{rotating}
\usepackage{tikz}
\usepackage{enumitem}
\usepackage{multicol} 
\usepackage[left=2cm,top=2cm, bottom=2cm,right=2cm]{geometry}

\DeclareMathOperator{\car}{cth_{cc}}
\DeclareMathOperator{\nt}{nt}
\DeclareMathOperator{\trivial}{t}
\newcommand{\intv}{I_{cc}}
\DeclareMathOperator{\hullc}{Hull_{cc}}

\newtheorem{theorem}{Theorem}

\newtheorem{lemma}[theorem]{Lemma}
\newtheorem{corollary}[theorem]{Corollary}
\newtheorem{proposition}[theorem]{Proposition}

\newtheorem{problem}[theorem]{Problem}

\title{Carath\'eodory Number in Cycle Convexity}
\author{Revathy S. Nair$^{a}$\footnote{revathyrahulnivi@gmail.com}
\and Bijo S. Anand$^{b}$\footnote{bijos\_anand@yahoo.com}\and Ullas Chandran S. V.$^{c}$ \footnote{svuc.math@gmail.com}  \and Julliano R. Nascimento$^{d}$\footnote{jullianonascimento@ufg.br}
 \\\\
 $^{a}$ \small Department of Mathematics, Mar Ivanios College, University of Kerala, \\\small Thiruvananthapuram, India\\
$^{b}$\small Department of Mathematics, Sree Narayana College, Punalur, Kerala \\$^{c}$\small Department of Mathematics, Mahatma Gandhi College, \\\small Thiruvananthapuram - 695004, Kerala, India\small \\$^{d}$\small Instituto de Informática, Universidade Federal de Goiás, Goiânia, GO, Brazil
}
\date{\today}

\begin{document}

\maketitle

\begin{abstract}
Let $G$ be a graph and $S \subseteq V(G)$. In the cycle convexity, we say that $S$ is \textit{cycle convex} if for any $u\in V(G)\setminus S$, the induced subgraph of $S\cup\{u\}$ contains no cycle that includes $u$. The \textit{cycle convex hull} of $S$, denoted by $\hullc (S)$, is the smallest cycle convex set containing $S$. 
A set $S \subseteq V(G)$ is said to be \textit{Carathéodory independent} if there exists a vertex $u \in \hullc(S) $ such that $u \notin\displaystyle \bigcup_{a \in S} \hullc (S \setminus \{a\}) $, and the Carathéodory number $\car(G)$ is the maximum size of such a set.
In this paper, we prove that given a graph $G$ and $k \in \mathbb{N}$, deciding whether $\car(G) \geq k$ is \NP-complete, even when $G$ is bipartite. On the other hand, we derive exact values and constant upper bounds for several graph classes, leading to polynomial-time algorithms. Some of them include forests, cycles, complete graphs, complete multipartite, split, and $P_4$-sparse graphs. In addition, we present a characterization of $n$-vertex graphs $G$ with extremal values near to $n$, including $\car(G) = n-1$ and $\car(G) = n-2$. Furthermore, we investigate the behavior of the Carathéodory number under graph products such as the strong, lexicographic, and Cartesian products.
\end{abstract}
\noindent{\small {\bf Keywords:} cycle convexity; Carathéodory number; Cartesian product; strong product; lexicographic product.}

\noindent{\small {\bf AMS Subj.Class:} 05C69, 05C76, 05C85}

\section{Introduction}

Convexity spaces form a rich research topic, providing an abstract treatment to notions that go beyond convex sets in Euclidean geometry. Applied to a graph $G$, a set ${\cal C}$ of subsets of $V(G)$ is a \textit{convexity} in $G$ if $(i)$ $\emptyset, V(G) \in {\cal C}$ and $(ii)$ ${\cal C}$ is closed under intersection. Each element of ${\cal C}$ is called a \textit{convex set}. The \textit{convex hull}, $\hullc (S) $ of a subset $S \subseteq V(G)$ is the smallest convex set containing $S$. For more details, we refer to~\cite{duchet1987convexity, van22}. 

A family of paths $\mathcal{P}$ in a graph $G$ give rise to the most natural graph convexities. A set $S \subseteq V(G)$ is $\mathcal{P}$-convex if it contains all vertices of every path in $\mathcal{P}$ between any two vertices of $S$. Classical examples include \textit{geodesic convexity}, where $\mathcal{P}$ is the set of shortest paths~\cite{everett1985hull, buckley-1990, farber-1986}, as well as \textit{monophonic convexity}~\cite{caceres-2005, source17, duchet1987convexity} and $P_3$-convexity~\cite{source11, centeno2011irreversible, coelho2019p3}, defined respectively over induced paths and paths on three vertices.

There are also convexity notions not defined via paths. Some examples are \textit{Steiner convexity}~\cite{source9} and $\Delta$\textit{-convexity}~\cite{bijo2,bijo3,bijo1}. In Steiner convexity, a set $S$ is st\textit{-convex} if it contains all vertices of any Steiner tree whose terminal set is contained in $S$. In $\Delta$-convexity, a set $S$ is $\Delta$\textit{-convex} if no vertex outside $S$ forms a $C_3$ with two vertices of $S$.

Whether defined via paths or not, convexity in graphs are useful to capture spreading processes among the entities. This perspective relates to graph-theoretical models for the diffusion of information, opinions, or diseases, such as those studied in~\cite{dreyer2009irreversible}. These connections have also motivated the study of algorithmic aspects of graph convexities; ; see the book~\cite{araujo2025introduction}.

Observe that $\Delta$-convexity is based on cycles on three vertices. One can ask for notions defined using larger cycles. In 2018, Araújo et al.~\cite{interval08} introduced \textit{cycle convexity}, that find applications in Knot Theory~\cite{araujo2020cycle} and provides such a generalization.

For a set $S$ of vertices in a graph $G$, the \textit{cycle interval} of $S$, denoted by $\intv(S)$, is the set formed by the vertices of $S$ and any $w \in V(G)$ that form a cycle with some vertices of $S$. We say that a vertex $w \in V(G)\setminus S $ is \textit{generated} by $S$ if $w\in \intv(S)$. If $\intv(S) = S$, then $S$ is \textit{cycle convex} in $G$. The \textit{cycle convex hull} of a set $S$, denoted by $\hullc (S) $, is the smallest cycle convex set containing $S$. The cycle convex hull of $S$ can be obtained by successive applications of the cycle interval operation. For that, we define $\intv^0(S) = S$, $\intv^1(S) = \intv(S)$, and $\intv^k(S) = \intv(\intv^{k-1}(S))$, for every $k \geq 2$. With this terminology, $\hullc (S) = \intv^k(S)$, for any $k \geq 1$ such that the equality $\intv^k(S) = \intv^{k-1}(S)$ holds.

Several parameters have been investigated in cycle convexity. Classical examples include the interval~\cite{interval08} and hull numbers~\cite{hull09}, as well as the convexity number~\cite{anand2025complexity,lima2024complexity}. Other studies consider parameters such as the rank and general position numbers~\cite{araujo2025on}, 
convex partitions~\cite{gomes2025some}, and percolation time~\cite{lima2024complexity}. In this work, we focus on the Carath\'{e}odory number.

\smallskip
Let $G$ be a graph and $S \subseteq V(G)$. The set $S$ is \textit{Carathéodory dependent (or, C-dependent)} provided $\hullc (S) \subseteq \mathop{\scalebox{1.1}{$\bigcup$}}\limits_{a \in S}\hullc (S \setminus \{a\})$ and it is \textit{Carathéodory independent (or, C-independent)} otherwise. That is, $S$ is said to be $\textit{C-independent}$, if there is a $p \in \hullc (S)  \setminus \mathop{\scalebox{1.1}{$\bigcup$}}\limits_{a \in S}\hullc (S \setminus \{a\})$. The \textit{Carath\'{e}odory number} of $G$, denoted by $\car(G)$ is the maximum cardinality of a $\textit{C-independent}$ set of $G$. 

\smallskip
The Carathéodory number has also been studied in other graph convexities. In $P_3$- and geodesic convexities, deciding whether a given graph $G$ and an integer $k$ has Carathéodory number at least $k$ is \NP-complete~\cite{barbosa2012caratheodory,dourado2013caratheodory}. On the other hand, polynomial-time results are known for some restricted graph classes, such as chordal graphs in $P_3$-convexity~\cite{coelho2014caratheodory} and split graphs in geodesic convexity~\cite{dourado2013caratheodory}. In monophonic convexity, the Carathéodory number is at most $2$ for any graph~\cite{source20}. More recently, in $\Delta$-convexity, this parameter has been studied on several graph classes and graph products~\cite{anand2025caratheodory}.

In this paper, we prove that, given a graph $G$ and a positive integer $k$, deciding whether $\car(G) \geq k$ is \NP-complete, even when $G$ is bipartite. On the positive side, we derive constant upper bounds for several graph classes, including forests, cycles, complete, complete multipartite, split, and $P_4$-sparse graphs. We also characterize families of graphs with Carathéodory number equal to $n-1$ which comprise of the cycles $C_n$ and we introduce a family with value $n-2$. All those results lead to polynomial-time algorithms. Further, we establish results on graph products, including the strong, lexicographic, and Cartesian products.

The paper is organized as follows. Section~\ref{sec:pre} presents some additional terminology, basic definitions and preliminary results. Section~\ref{sec:complexity} contains the hardness results. In Section~\ref{sec:extremal}, we present some results for specific graph classes. Section~\ref{sec:products} contains results on graph products and Section~\ref{sec:conclusions} presents some final remarks.

\section{Preliminaries}
\label{sec:pre}

All the graphs considered in this paper are connected, simple, and undirected. 
We denote the graph by $G=(V,E)$ and when needed, we write $V=V(G)$ and $E=E(G)$ to avoid ambiguity.
Given a graph $G$ and $u \in V(G)$, the \index{open neighbourhood} open and the  \index{closed neighbourhood} closed neighbourhood of a vertex $u$ are $N_{G}(u) = \{v : uv \in E(G)\}$ and $N_{G}[u] = N_{G}(v) \cup \{u\}$, respectively. 
The \textit{degree} of a vertex $v \in V(G)$ is $\deg_G(v) = |N_G(u)|$. We omit $G$ when such a graph is clear from the context. Two vertices $u,v$ in a graph $G$ are called \textit{true twins} if $N_G[u]=N_G[v]$ and \textit{false twins} if $N_G(u)=N_G(v)$. A graph $G$ is \textit{connected} if there exists a path between every pair of vertices in $G$. Otherwise, $G$ is \textit{disconnected}. A \textit{component} of a graph $G$ is a maximal connected subgraph of $G$. A vertex $v$ in a connected graph $G$ is a \textit{cut-vertex} of $G$ if $G - v$ is disconnected. A connected graph $G$ is said to be $\textit{2-connected}$, if $G\setminus\{v\}$ is connected for every vertex $v \in V(G)$. Given a graph $G=(V,E)$ and a vertex set $S \subseteq V(G)$,
we denote by $G[S]$ the \emph{subgraph of $G$ induced by $S$}, that is, the graph with vertex set $S$ and edge set
$E(G[S]) = \{ uv \in E(G) : u,v \in S \}$.

We denote by $P_n$, $C_n$ and $K_n$, a path, cycle, and complete graph on $n$ vertices, respectively. A graph $G$ is \textit{unicyclic}, if it is connected and contains precisely one cycle. A \textit{bicyclic graph} is a simple connected graph in which the number of edges equals the number of vertices plus one. A vertex $v \in V(G)$ is a $\textit{universal vertex}$ of $G$, if $v$ is adjacent to all other vertices of $G$. 

For a positive integer $i$, we denote $[i]$ the set $\{1, 2,\ldots, i\}$. An \textit{independent set} (\textit{clique}) of $G$ is a subset $S \subseteq V(G)$ such that its vertices are pairwise non-adjacent (adjacent) in $G$. A \textit{complete $q$-partite (multipartite) graph} is a graph that admits a partition of its vertex set into $q$ independent sets $V_1, V_2,\dots, V_q$, where every vertex in $V_i$ is adjacent to all vertices in $V_j$, for every $j \neq i$, where $i,j \in [q]$. Two graphs $G=(V ,E)$ and $G'=(V',E')$ are \textit{isomorphic} if and only if there is a bijection, called \textit{isomorphism function}, $\varphi: V(G) \to V'(G')$ such that $uv \in E(G)$ if and only if $\varphi(u)\varphi(v) \in E'(G')$, for every $u,v \in V(G)$. We denote by $G \simeq G'$ if $G$ and $G'$ are isomorphic. Let $G$ and $H$ be two graphs. We say that $G$ is an $H$\textit{-free graph} if $G$ does not contain any subgraph isomorphic to $H$. A \textit{cograph} is a $P_4$-free graph. Given two disjoint graphs $G_1$ and $G_2$, the \textit{disjoint union} $G_1 \cup G_2$ is the graph $(V(G_1)\cup V(G_2),\, E(G_1)\cup E(G_2))$. The \textit{join} $G_1 + G_2$ is obtained from $G_1 \cup G_2$ by adding all possible edges between the vertices in $V(G_1)$ and $V(G_2)$.

We begin with basic properties of Carathéodory independent sets. The following lemma includes known results from Anand et al.~\cite{anand2025caratheodory} and Barbosa et al.~\cite{barbosa2012caratheodory}, that also holds for cycle convexity, together with some additional properties.

\begin{lemma}\label{lemma:cth_ind_set_cc}
If $S$ is a $C$-independent set of a graph $G$ in cycle convexity and $|S|\geq 2$, then:
\begin{enumerate}[label={\normalfont(\alph*)}]     
    \item $S$ induces a graph containing at least one edge~\textnormal{\cite{anand2025caratheodory}}.    
    \item every vertex in $S$ is part of some cycle in $G$~\textnormal{\cite{anand2025caratheodory}}.    
    \item $\hullc (S) $ induces a connected subgraph of $G$~\textnormal{\cite{barbosa2012caratheodory}}.
    \item no proper subset $S'$ of $S$ satisfies $\hullc (S') = V(G)$~\textnormal{\cite{barbosa2012caratheodory}}.
    \item $S$ induces a forest.\label{C-independent ppty}
    \item $\hullc (S)$ induces a graph with no leaves.
    \item $S$ is not convex.    
\end{enumerate}
\end{lemma}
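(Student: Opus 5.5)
Throughout, let $p\in \hullc(S)\setminus\bigcup_{a\in S}\hullc(S\setminus\{a\})$. The basic observation is that $p\notin S$: if $p\in S$, then $p\in S\setminus\{a\}$ for any $a\neq p$, which exists since $|S|\ge 2$. Hence $p$ is generated at some stage of the iteration $\intv^k(S)$. We also use that the hull operator is monotone and idempotent.

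Items (a)--(d) are cited from~\cite{anand2025caratheodory,barbosa2012caratheodory}. Their arguments transfer verbatim, so we only recall the ideas.
\begin{itemize}
\item For (a): if $S$ were independent, no cycle could use only vertices of $S$ plus one new vertex, up to induction on the generation process.
\item For (b): a vertex of $S$ lying on no cycle of $G$ never helps generate any vertex. It can therefore be dropped without changing the hull of the remaining set, contradicting the choice of $p$.
\item For (d): if $\hullc(S')=V(G)$ with $S'\subsetneq S$, pick $a\in S\setminus S'$. Then $p\in\hullc(S')\subseteq\hullc(S\setminus\{a\})$.
\item For (c): every vertex generated is attached by a cycle to earlier vertices, so the generated vertices connect components. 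One then argues, as in~\cite{barbosa2012caratheodory}, that a disconnected hull would split $S$ so that $p$ lies in the hull of one part, a proper subset.
\end{itemize}

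For (e), suppose $G[S]$ contains a cycle $C$. The plan is to show that some vertex $a\in S$ is redundant, that is, $a\in\hullc(S\setminus\{a\})$. Take $a\in V(C)$. Then $C$ is a cycle through $a$ whose other vertices lie in $S\setminus\{a\}$, so $a\in\intv(S\setminus\{a\})$. By monotonicity and idempotence, $\hullc(S)=\hullc(S\setminus\{a\})$. Then $p\in\hullc(S\setminus\{a\})$, a contradiction.

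For (g), a convex $S$ gives $\hullc(S)=S$, so $p\in S$, which contradicts the basic observation.

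For (f), suppose $v$ has degree at most one in $H=G[\hullc(S)]$; note that $\hullc(S)$ is not a single vertex, since $|S|\ge 2$. There are two cases.
\begin{itemize}
\item If $v\notin S$, then $v$ was generated by a cycle through $v$ inside the current set. That cycle lies in $H$, so $v$ has degree at least two in $H$.
\item If $v\in S$, then $v$ lies on no cycle of $H$. Show by induction on the stages that $\intv^k(S)\setminus\{v\}\subseteq \intv^k(S\setminus\{v\})$. Any cycle generating a new vertex lies inside $H$. A vertex of degree at most one in $H$ cannot lie on any cycle in $H$, so that cycle avoids $v$, and the new vertex is generated in the smaller process as well. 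Hence $\hullc(S)\setminus\{v\}\subseteq\hullc(S\setminus\{v\})$. Since $p\neq v$, this again contradicts the choice of $p$.
\end{itemize}

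The same induction argument also gives a clean proof of (b). The main technical point, and the step I expect to be the main obstacle, is carrying out this stage-by-stage induction for (f) carefully. One must ensure that every generating cycle stays inside the current set, and hence inside $H$.
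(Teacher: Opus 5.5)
Your proposal is correct and follows the paper's route. Item (e) comes from a cycle in $G[S]$ making any of its vertices redundant. Item (g) comes from $\hullc(S)=S$ forcing the witness into $S$. Item (f) rests on the observation that a vertex of degree at most one in $G[\hullc(S)]$ must lie in $S$ and can never take part in a generating cycle. Your stage-by-stage induction $\intv^k(S)\setminus\{v\}\subseteq\intv^k(S\setminus\{v\})$ is a more rigorous version of the paper's informal step (``the membership of $u'$ in $\hullc(S)$ is not due to $u$''), and unlike the paper's argument it does not need item (c).
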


\begin{proof}
Items~(a) and~(b) follow from Anand et al.~\cite{anand2025caratheodory} and Items~(c) and~(d) follow from Barbosa et al.~\cite{barbosa2012caratheodory}, since the same reasoning applies to cycle convexity. 
    \begin{enumerate}[label={\normalfont(\alph*)}, start=5] 
    \item By contradiction, suppose that $G[S]$ contains a cycle $C$. Then, for any $u\in V(C)$, we have $u\in \hullc (V(C)\setminus \{u\})$, consequently $\hullc (S \setminus \{u\}) = \hullc (S)$. Therefore, $\hullc (S) \subseteq \displaystyle\bigcup_{a\in S}\hullc (S\setminus\{a\})$ and $S$ is a C-dependent set, a contradiction. 

    \item Suppose, by contradiction that $u$ is a leaf in $G[\langle S \rangle]$. Let $N_{G[\hullc (S)]}(u) = \{ u'\}$. Since $S$ is a Carathéodory set, there exists $v \in \hullc (S) \setminus\displaystyle \bigcup_{a \in S} \hullc (S \setminus \{a\})$. Given that $u$ is a leaf, we know that $u \in S$. By Item~(c), $G[\hullc (S)]$ is connected, then $u' \in \hullc (S)$. However, since $u'$ is a cut-vertex, the membership of $u'$ in $\hullc (S)$ is not due to $u$. Hence $v \in \hullc (S \setminus \{u\}) = \hullc (S)$, a contradiction.
      
    \item To the contrary, if $S$ is convex, then $\hullc (S) = S$. Given that $\displaystyle\bigcup_{a \in S} \hullc (S \setminus \{a\}) = \hullc (S)$, we get that $S$ is $C$-dependent, a contradiction. \qedhere
\end{enumerate}
\end{proof}

\section{Computational Complexity}
\label{sec:complexity}

We show in this section that the problem related to the Carath\'{e}odory number in cycle convexity is NP-complete in the class of bipartite graphs. The decision problem is stated below.

\begin{problem}{\textsc{Carath\'{e}odory Number in Cycle Convexity}}\\
\textbf{Instance:} A graph $G$ and a positive integer $k$.\\
\textbf{Question:} Is $\car(G) \geq k$? 
\end{problem}


\begin{theorem}\label{theo:NPc_cth}
    \textsc{Carath\'{e}odory Number in Cycle Convexity} is NP-complete, even for bipartite graphs.
\end{theorem}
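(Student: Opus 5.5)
Membership in \NP\ is routine. A certificate is a set $S$ with $|S|\ge k$ together with a vertex $p$. The hull $\hullc(S)$ is computed by iterating $\intv$ at most $n$ times. Each step is polynomial, because $w\notin X$ lies in $\intv(X)$ exactly when $w$ has two neighbours in the same connected component of $G[X]$. The $|S|$ hulls $\hullc(S\setminus\{a\})$ are computed the same way, and we then check that $p\in\hullc(S)$ avoids all of them.

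For hardness I would reduce from 3-SAT (or a restricted variant such as monotone or planar 3-SAT, if that is more convenient), following the scheme of Barbosa et al.~\cite{barbosa2012caratheodory} for the $P_3$-convexity. The variable gadget for $x_i$ would be a pair of candidate vertices $x_i,\bar x_i$ of which exactly one should lie in $S$. The clause gadget for $c_j$ would be a vertex $c_j$ adjacent to the literal vertices it contains, through subdivided paths so the graph stays bipartite. A target vertex $p$ would be generated only when every clause vertex has been generated, using a chain/ladder structure, and $k$ would be set to $n$ (one vertex per variable) plus a fixed number of auxiliary vertices. All edges would go between a ``literal/even'' side and a ``clause/odd'' side, with subdivisions inserted wherever a cycle might become odd. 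This keeps $G$ bipartite, which is harmless because cycle convexity only requires some cycle, not a triangle.

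The forward direction takes a satisfying assignment and lets $S$ consist of the true literal vertices plus the auxiliary vertices. We then check that $\hullc(S)$ propagates through every clause gadget, since each clause sees a true literal that closes a cycle with an auxiliary anchor, and so eventually reaches $p$. Removing any single vertex must break this propagation: removing a true literal kills some clause for which it is the unique witness. That is not automatic, so I would duplicate the construction per occurrence, or require each literal to be a critical witness via a ``last clause'' chain. The Lemma~\ref{lemma:cth_ind_set_cc} constraints must also hold: $S$ induces a forest, and every vertex of $S$ lies on a cycle. The gadgets would be designed so this is immediate.

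The converse is where I expect the main difficulty. Given a C-independent set of size at least $k$, we must extract a satisfying assignment. The plan is a normalization argument. First, by Lemma~\ref{lemma:cth_ind_set_cc}(d),(e),(g), $S$ cannot contain both $x_i$ and $\bar x_i$ together with the anchor, because that would close a cycle or make a vertex redundant. Second, $S$ cannot contain two vertices of one clause gadget without one being in the hull of the rest. Third, the size bound $k$ then forces exactly one literal per variable. Finally, $p$ lying outside every $\hullc(S\setminus\{a\})$ must be shown to force $p\in\hullc(S)$ only via all clauses being reached, so every clause contains a chosen literal. Controlling stray cycles in the gadget that could generate vertices ``for free'' is the delicate part. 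I would first try making all non-gadget paths long and tree-like, so that the only cycles available pass through literal–clause–anchor structures.
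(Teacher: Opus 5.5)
Your \NP-membership argument is fine, but the hardness part is a template rather than a reduction. The two issues you flag as ``not automatic'' and ``delicate'' are exactly where the proof lies, and as designed your forward direction fails. With one literal vertex per variable, $k=n+O(1)$, and plain 3-SAT, a true literal is redundant whenever it is not the unique true literal of some clause (or occurs in no clause). Then $p\in\hullc(S\setminus\{x_i\})$, so $S$ is C-dependent. Linking literals to $c_j$ through subdivided paths also transmits nothing in this convexity. A degree-$2$ vertex not in $S$ enters the hull only after both of its neighbours do, so the subdivision vertex next to $c_j$ can never lie on the cycle that first generates $c_j$. Your fallback of duplicating per occurrence points the right way, but then $k$, cross-occurrence consistency, and uniqueness of the witness per clause must all be rebuilt.

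That rebuild is what the paper does. It reduces from \textsc{One-In-Three 3-SAT} and represents each occurrence $\ell\in C_i$ by a path $\ell_i^2\ell_i^3\ell_i^4$ whose two ends are adjacent to $c_i$. This gives a $4$-cycle, so bipartiteness comes for free. $S$ takes the path of the unique true literal of each clause plus a $3$-vertex path in each connector gadget $G_{D_j}$, so $k=6m-3$. The target is $c_0$, with $N(c_0)=\{c_1,c_m\}$, on the single cycle $c_0c_1d_1c_2\cdots d_{m-1}c_m$. Deleting any vertex of $S$ loses some $c_i$ or $d_j$ and disconnects $c_1$ from $c_m$.

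Your converse has no mechanism limiting how many vertices of a C-independent set can sit in auxiliary or subdivision parts. So ``the size bound forces exactly one literal per variable'' is unsupported, as is the claim that $x_i$ and $\bar x_i$ cannot both be chosen. The missing idea is a forcing gadget exploited through Lemma~\ref{lemma:cth_ind_set_cc}(d). The paper adds false twins $z,z'$ adjacent to all of $W$ and shows that once both enter the hull, every vertex does (Claim~1). Hence no proper subset of $S$ may reach them. Together with Lemma~\ref{lemma:cth_ind_set_cc}(c),(f) (connected hull, no leaves), this confines $S$ to the clause and connector paths, with at most $3$ vertices per gadget (Claims~2--4.2). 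Counting against $6m-3$ then pins down the structure. Consistency between complementary occurrences uses the same machinery: selecting $\ell$ in $C_i$ and $\bar\ell$ in $C_{i'}$ pulls the $W$-vertices $\ell^6,\bar\ell^6$ into the hull, which Claim~3 rules out. Without such a gadget and an explicit count, your converse direction cannot be carried out.
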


\begin{proof}
    The NP-membership is due to Araujo et al.~\cite{hull09}. The hardness part is done by a polynomial-time reduction from the NP-complete problem \textsc{One-In-Three 3-SAT (LO4)}~\cite{garey1979computers}. Such a problem receives as input a boolean formula $\phi = (X, \mathscr{C})$ in the conjunctive normal form (CNF), where $X$ is a set of variables and $\mathscr{C}$ is a collection of clauses such that, for every $c \in C$, $|c| = 3$. The question is whether there exists a truth assignment for $X$ such that each clause in $\mathscr{C}$ has exactly one true literal. The construction is inspired by~\cite{barbosa2012caratheodory}.
    
    From an instance $\phi = (X, \mathscr{C})$ of \textsc{One-In-Three 3-SAT} with $X = \{x_1, \dots, x_n\}$ and $\mathscr{C} = \{C_1, \dots, C_m\}$, we construct an instance $(G,k)$ of \textsc{Carath\'{e}odory Number in Cycle Convexity} following the next steps. We may assume that $m \geq 2$. 

     \begin{itemize}
    \item[(S1)] For every clause $C_i = (p \lor q \lor r)$, $1 \leq i \leq m$, we add to $G$ a clause gadget $G_{C_i}$ as defined in Figure~\ref{fig:gadgets_cth}(a). Let $A_C = \{c_i: 1\leq i \leq m\}$, $X_C = \{p_i^1, p_i^3, q_i^1, q_i^3, r_i^1, r_i^3 : 1 \leq i \leq m\}$, $Y_C = \{p_i^2, p_i^4, q_i^2, q_i^4, r_i^2, r_i^4 : 1 \leq i \leq m\}$, and $W_C = \{c_i^1, c_i^2 : 1 \leq i \leq m\}$.

    \item[(S2)] For every $1 \leq j < m$, we add to $G$ a subgraph $G_{D_j}$ denoted as in Figure~\ref{fig:gadgets_cth}(b). Let $A_D = \{ d_j^2, d_j^4, d_j^6, d_j^8 : 1 \leq j < m \}$, $B_D = \{d_j: 1\leq j < m\}$, $X_D = \{d_j^9, d_j^{11} : 1 \leq j < m \}$, $Y_D = \{d_j^1, d_j^3, d_j^5, d_j^7 : 1 \leq j < m \}$, and $W_D = \{d_j^{10}, d_j^{12}: 1 \leq j < m\}$. 
    
    Define $A = A_C \cup A_D$, $B = B_D$, $X = X_C \cup X_D$, $Y = Y_C \cup Y_D$, and $W = W_C \cup W_D$.
    
    
    \item[(S3)] For every pair of opposite literals $\ell \in C_i$ and $\overline{\ell} \in C_j$, for $1 \leq i < j \leq m$, we add nine vertices $\ell_0, \ell^5, \ell^6, \ell^7, \ell^8, \overline{\ell}^5, \overline{\ell}^6, \overline{\ell}^7, \overline{\ell}^8$ and twenty edges, obtained by all edges between the the following pair of vertex sets:
    \begin{multicols}{2}
    \begin{itemize}
        \item $(\{\ell^5\},\{\ell_i^2, \ell_i^4, \ell^6, \overline{\ell}^6\})$; 
        \item  $(\{\overline{\ell}^5\},\{\overline{\ell}_j^2, \overline{\ell}_j^4, \ell^6, \overline{\ell}^6\})$;
        \item $(\{\ell_i^2, \ell_i^4\}, \{\ell^7, \ell^8\})$; 
        \item  $(\{\overline{\ell}_j^2, \overline{\ell}_j^4\}, \{\overline{\ell}^7, \overline{\ell}^8\})$, and 
        \item $(\{\ell^0\},\{\ell^7, \ell^8, \overline{\ell}^7, \overline{\ell}^8\})$.
        \item[]
    \end{itemize}%
    \end{multicols}%
    Let $\ell^7, \ell^8, \overline{\ell}^7, \overline{\ell}^8 \in A$, $\ell^5, \overline{\ell}^5 \in X$, $\ell^0 \in Y$, and $\ell^6, \overline{\ell}^6 \in W$. See Figure~\ref{fig:construction_cth} for an example with literals $p$ and $\overline{p}$.
    

    \item[(S3)] Add a new vertex $c_0 \in B$ and the edge set $\{c_jc_{j+1} : 0 \leq j < m \} \cup \{c_0c_m\}$.

    \item[(S4)] Add the vertex set $Z = \{z,z'\}$ and the edges $zw, z'w$, for every $w \in W$.
    
    \item[(S5)] Define $k = 6m-3$.
    \end{itemize}
    
\begin{figure}[htb!]
    \centering
    \setlength{\fboxsep}{0pt} %
    \setlength{\fboxrule}{0pt}
    \fbox{
    \scalebox{0.99}{\input{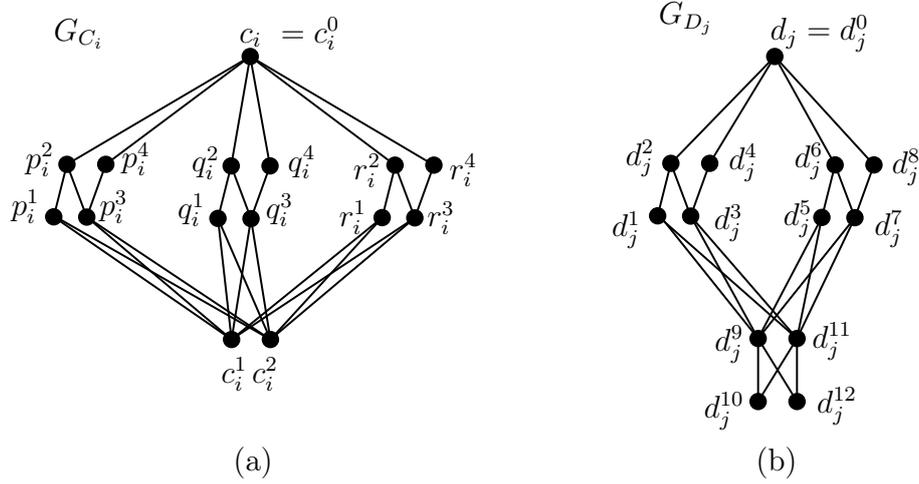}}
    }
    \caption{Subgraphs for the construction presented in Theorem~\ref{theo:NPc_cth}.
}
    \label{fig:gadgets_cth}
\end{figure}

   A sketch of the graph $G$ can be seen in Figure~\ref{fig:construction_cth}. By construction, $A \cup X \cup Z$ and $B \cup Y \cup W$ are independent sets. Then $V(G) = (A \cup X \cup Z) \cup (B \cup Y \cup W)$ implies that $G$ is bipartite. Let $L = \{\ell_0, \ell^s, \overline{\ell}^s: 1 \leq i \leq m, \, 5 \leq s \leq 8, \,\ell \in C_i, \,\ell \text{ has an opposite literal} \}$. From the Steps (S1)-(S5) we get $V(G) =\displaystyle \bigcup_{1 \leq i \leq m} V(G_{C_i}) \cup\displaystyle \bigcup_{1 \leq j < m} V(G_{D_j}) \cup L \cup \{c_0\} \cup Z$, which implies $|V(G)| = 15m+ 13(m-1) + 9 \cdot O(m^2) + 1 + 2  \in O(m^2)$, assuring a polynomial-time reduction.

\begin{figure}[htb!]
    \centering
    \setlength{\fboxsep}{0pt} %
    \setlength{\fboxrule}{0pt}
    \fbox{
    \scalebox{0.95}{\input{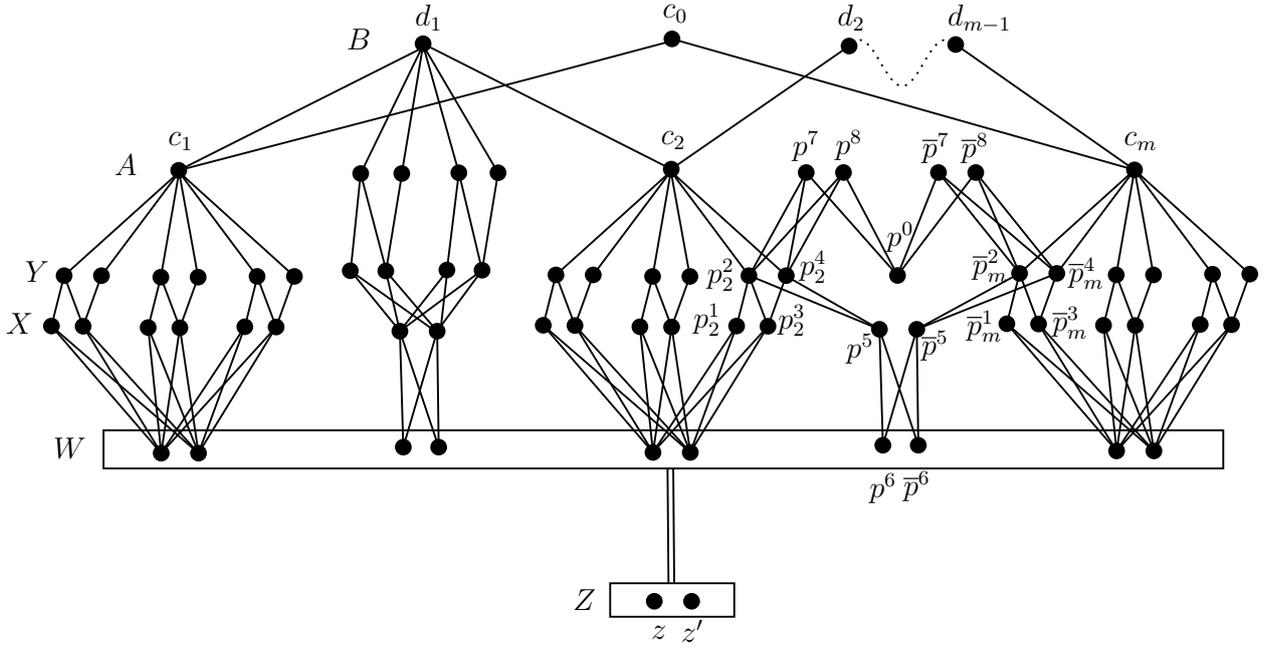}}
    }
    \caption{Sketch of the graph $G$ constructed for Theorem~\ref{theo:NPc_cth} from an instance of \textsc{One-In-Three 3-SAT} $\phi$ where the literal $p \in C_2$ and $\overline{p} \in C_m$. The double edges represents all the possible edges between the vertices of the two rectangles.
}
    \label{fig:construction_cth}
\end{figure}    

    Next, we show that $\phi$ is satisfiable with exactly one true literal per clause if and only if $G$ has Carathéodory independent set of order at least $k = 6m-3$.

    Let $t: X \to \{T,F\}$ be a truth assignment that satisfies all clauses in $\mathscr{C}$ with exactly one true literal per clause. Let $S = S_C \cup S_C$, where $S_C = \{\ell_i^2, \ell_i^3, \ell_i^4 : 1 \leq i \leq m, \ell \in C_i, \text{and } t(\ell) = T \}$ and $S_D = \{d_j^2, d_j^3, d_j^4: 1 \leq j < m \}$. Notice that $|S| = 3m + 3(m-1) = 6m-3$. We show that $S$ is a Carath\'{e}odory independent set of $G$.

    First, for every $1 \leq i \leq m$, $c_i$ lies in a cycle $\ell_i^2\ell_i^3\ell_i^4c_i$, for the true literal $\ell$ in $C_i$. Then $c_i \in \hullc (S)$ and $\{c_1, \dots, c_m\} \subseteq \hullc (S)$. Also, if $\ell \in C_i$ has an opposite literal, then each vertex $v \in \{\ell^5, \ell^7, \ell^8\}$ lies in the cycle $\ell_i^2\ell_i^3\ell_i^4v$, then $v \in \hullc (S)$. Consequently $\ell^0 \in \hullc (S)$ because of the cycle $\ell^7\ell^2_i\ell^8\ell^0$.     
    Further, for every $1 \leq j < m$, $d_j$ lies in a cycle $d_j^2d_j^3d_j^4d_j$, which implies that $d_j \in \hullc (S)$ and $\{d_1, \dots, d_{m-1} \} \subseteq \hullc (S)$. Further, given the cycle $c_0c_1d_1c_2d_2\dots d_{m-1}c_m$ we get that $c_0 \in \hullc (S)$ and $S' = \{c_0,c_1, \dots, c_m\} \cup \{d_1, d_2, \dots, d_{m-1} \} \cup \{ \ell^0, \ell^5, \ell^7, \ell^8 : \ell \in C_i$ has an opposite literal in $\phi \} \cup S \subseteq \hullc (S)$. By construction, no vertex in $V(G) \setminus S'$ has two neighbors in $S'$, which implies that $S' = \hullc (S)$. 
    
    Next, we evaluate $\hullc (S \setminus \{a\})$, for every $a \in S = S_C \cup S_D$. First, let $a = \ell_i^s \in S_C$, for some $i \in \{1, \dots, m\}$, $\ell \in C_i$ such that $\ell$ is true, and $s \in \{2,3,4\}$. In this case, for every $i' \in \{1, \dots, m\}\setminus \{i\}$, $c_{i'} \in \hullc (S \setminus \{a\})$ as well as $\{\ell^0, \ell^5, \ell^7, \ell^8 : \ell \in C_{i'} \text{ has an opposite literal}\} \subseteq \hullc (S \setminus \{a\})$, Next, $d_j \in \hullc (S \setminus \{a\})$, for every $1 \leq j < m$, and the main point is that $c_i \notin \hullc (S \setminus \{a\})$. Recall that $N_G(c_0) = \{c_1, c_m\}$. If $a = \ell_1^s$ (resp. $a = \ell_m^s$) it is clear that $c_1 \notin \hullc (S)$ (resp. $c_m \notin \hullc (S)$), which implies that $c_0 \notin \hullc (S)$. Otherwise, we have that $N_G(c_0) = \{c_1, c_m\} \subseteq \hullc (S)$, but in the graph induced by $\hullc (S)$ the vertices $c_1$ and $c_m$ belong to distinct connected components, which implies that $c_0 \notin \hullc (S \setminus \{a\})$. The case $a = d_j^s \in S_2$, for some $j \in \{1, \dots, m-1\}$ and $s \in \{2,3,4\}$ is analogous. Then $c_0 \notin \displaystyle\bigcup_{a \in S} \hullc (S \setminus \{a\})$. As we have seen in the previous paragraph, $c_0 \in S' = \hullc (S)$, and then $S$ is a Carath\'{e}odory independent set.

    For the converse, let $S$ be a Carath\'{e}odory independent set of $G$ with $|S| \geq 6m-3$. We begin with a property of the construction regarding the vertices in $Z$.

    \medskip
    \noindent \textbf{Claim~1.} Let $t \geq 0$ be an integer. If $z,z' \in I_{cc}^t(S)$, then $\hullc (S) = V(G)$.

    \smallskip
    \noindent Suppose that $z,z' \in I_{cc}^t(S)$, for some integer $t \geq 0$. By hypothesis, $S$ is a Carathéodory independent set, then Lemma~\ref{lemma:cth_ind_set_cc}(c) states that $G[\hullc (S)]$ is connected. Thus, there exists $t' \geq 0$ such that some vertex $w \in N_G(\{z,z'\}) \subseteq W$ belongs to $I_{cc}^{t'}(S)$. By construction, the false twins $z$ and $z'$ are both adjacent to every vertex in $W$. Thus, the path $zwz'$ forms a $4$-cycle with each vertex $w' \in W \setminus \{w\}$. This gives $W \subseteq \hullc (S)$. Further, we have that every vertex $x \in X$ has two neighbors $w_1, w_2 \in W$. Since $w_1, w_2, z \in \hullc (S)$, the set $\{w_1, z, w_2, x\}$ induces a $C_4$ in $G$, then $x \in \hullc (S)$; in consequence $X \subseteq \hullc (S)$. In addition, for $1 \leq j < m$, the path $d_j^{9}d_j^{10}d_j^{11}$ forms a $4$-cycle with each vertex in $\{d_j^1, d_j^{3}, d_j^{5}, d_j^{7}\}$, then $d_j^1, d_j^{3}, d_j^{5}, d_j^{7} \in \langle S \rangle$.

    Next, let $U_1 = \{\ell_i^2: \ell \in C_i \} \cup \{d_j^2 : 1 \leq j < m \} $ and $U_2 = \{\ell_i^4: \ell \in C_i$  and $\ell$ has an opposite literal$\}$.        
    For $u \in U_1 \cup U_2$, the construction implies that $u$ has two neighbors $u_1, u_2 \in X \cup \{d_j^1, d_j^{3}, d_j^{5}, d_j^{7} : 1 \leq j < m \}$. Particularly if $u \in U_1$, $u$ lies in a cycle $u_1wu_2u$, for some $w \in N_G(\{u_1,u_2\}) \cap (W \cup \{d_j^9, d_j^11\})$.  The case $u \in U_2$, implies that $u$ belongs to a cycle $u_1wzw'u_2u$ for $w, w' \in N_G(\{u_1,u_2\}) \cap W$. Hence $u \in \hullc (S)$ and $U_1 \cup U_2 \subseteq \hullc (S)$.

    Now, consider any two literals $p,q \in C_i$. Since $p_i^2, q_i^2 \in \hullc (S)$, we have that $c_i \in \hullc (S)$ because of the cycle $p_i^2p_i^3c_i^1q_i^3q_i^2c_i$, as $p_i^3,q_i^3, c_i^1 \in \hullc (S)$. Given the cycle $c_ip_i^2p_i^3p_i^4$, we get that $p_i^4 \in \hullc (S)$. This shows that $V(G_{C_i}) \subseteq \hullc (S)$ for every $i \in \{1, \dots, m\}$. Next, consider $V(G_{D_j})$ for $j \in \{1, \dots, m-1\}$. Since $\{d_j^1, d_j^3, d_j^5, d_j^7, d_j^9, d_j^{11}\} \subseteq \hullc (S)$, the previous reasoning applies here as well. By replacing the vertices $p$ and $q$ with $d_j^2$ and $d_j^6$, respectively, and using analogous arguments, we conclude that $V(G_{D_j}) \subseteq \hullc (S)$. Since $\{c_1, \dots, c_m, d_1, \dots, d_{m-1}\} \subseteq \hullc (S)$, the cycle $c_1d_1c_2d_2\dots d_{m-1}c_mc_0$ implies that $c_0 \in \hullc (S)$.    
    Additionally, for $u \in U_3 = \{\ell^7, \ell^8, \overline{\ell}^7, \overline{\ell}^8 : 1 \leq i \leq m, \,\ell \in C_i \text{ such that } \ell \text{ has an opposite literal} \}$, the cycle $\ell_i^2\ell_i^3\ell_i^4u$ assures $u \in \hullc (S) $, then we get $U_3 \in \langle S \rangle$. Also, $\ell^0$ lies in a cycle $\ell^7\ell^2\ell^8\ell^0$, then $\ell^0 \in \hullc (S)$. This gives $\hullc (S) = V(G)$ as stated.

    \medskip
    
    In the next claims we show some restrictions on which parts of $G$ can contain vertices of the Carathéodory independent set $S$. Recall that $|S| \geq 6m-3$. Define $F = Y_D \cup X \cup W \cup Z$.

    \medskip
    \noindent \textbf{Claim~2.} It holds that $G[F \cap S]$ is a $P_3$-free graph. Moreover every connected component of $G[F \cap S]$ is isomorphic to either a $K_1$ or a $K_2$.

    \smallskip
    \noindent Suppose by contradiction that $S' = \{f_1,f_2,f_3\}$ induces a $P_3$ in $G[F \cap S]$. By construction, every vertex in $F$ has a false twin in $G[F]$, then, regarding to $f_2$ there is $f_4 \in F \cap N_G(\{f_1,f_3\})$. Thus, the cycle $f_1,f_2,f_3,f_4$ implies that $f_4 \in \hullc (S')$. Again, by construction, either immediately $Z \subseteq \hullc (S')$ or $z,z'$ has two neighbors in $\{f_1, f_2, f_3, f_4 \}$, which also gives further $Z \subseteq \hullc (S')$. By Claim~1, $\hullc (S') = V(G)$ and since $m \geq 2$,  $|S| \geq 6m-3 \geq 9$, which implies that $S'$ is a proper subset of $S$, contradicting Lemma~\ref{lemma:cth_ind_set_cc}(d). Hence $G[F \cap S]$ is a $P_3$-free graph, i.e, a disjoint union of cliques. Since $G$ is bipartite, the maximum clique of $G$ has order two, then every connected component of $G[F \cap S]$ is either a $K_1$ or a $K_2$. 

    \medskip
    \noindent \textbf{Claim~3.} Let $V_1 = \{\ell_i^3 : \ell \in C_i,  1 \leq i \leq m\}$, $V_2 = \{d_j^3, d_j^7: 1 \leq j < m \}$, and $V_3 = \{ \ell^5, \overline{\ell}^5: \ell$ has an opposite literal$\}$. If $v \in F \cap S$, then $v \in V_1 \cup V_2 \cup V_3$.
    
    \smallskip
    \noindent For the opposite, suppose that $v \in (F \cap S) \setminus  (V_1 \cup V_2 \cup V_3)$. Remind that Lemma~\ref{lemma:cth_ind_set_cc}(f) implies that $\hullc (S)$ induces a graph with no leaves and Claim~2 implies that $F \cap S$ induces a $P_3$-free graph. If $v \in Z \cup W \cup \{d_j^9, d_j^{11}: 1 \leq j < m\}$, given that $G[\hullc (S)]$ is connected (by Lemma~\ref{lemma:cth_ind_set_cc}(c)), $v$ has two neighbors $v_1,v_2$ in $F \cap \langle S \rangle$, which induce a $P_3: v_1vv_2$, a contradiction. Then we obtain that $\hullc (S) \cap (Z \cup W \cup \{d_j^9, d_j^{11}: 1 \leq j < m\}) = \emptyset$. 
    If $v \in \{\ell_i^1: \ell \in C_i, 1 \leq i \leq m\} \cup \{d_j^1, d_j^5 : 1 \leq j < m\}$, then by construction $v$ has at most one neighbor in $(V(G) \setminus F) \cap \hullc (S)$, thus $v$ is a leaf or an isolated vertex in $\hullc (S )$, a contradiction.

    \medskip
    For the rest of the proof, let $Q_{C_i} =  \{c_i\} \cup \{ \ell_i^2, \ell_i^3, \ell_i^4: \ell \in C_i \} \cup \{\ell^0, \ell^5, \ell^7, \ell^8 :  \ell \in C_i$  which has an opposite literal $\overline{\ell}$ in $\phi\}$ and $Q_{D_j} = \{d_j, d_j^2, d_j^3, d_j^4, d_j^6, d_j^7, d_j^8\}$. In the next, Claim~4.1 expresses some properties of a subset of $Q_{C_i}$ associated to a literal $p \in C_i$, as well as Claim~4.2 captures a similar idea for $Q_{D_j}$.

    \medskip
    \noindent \textbf{Claim~4.1.} Let $v \in S$ and for a literal $p \in C_i$, we define $Q_{C_i}(p) = \{c_i\} \cup \{ p_i^2, p_i^3, p_i^4\} \cup \{p^0, p^5, p^7, p^8 :  p$  has an opposite literal $\overline{p}$ in $\phi\}$.  If $v \in Q_{C_i}(p)$, then the following holds:
    \begin{itemize}
        \item[(a)] $S \cap (Q_{C_i} \setminus Q_{C_i}(p)) = \emptyset$, 
        \item[(b)] $|S \cap Q_{C_i}(p)| = 3$, and
        \item[(c)] for every clause $C_{i'}$ that contains $\overline{p}$,  $S \cap (Q_{C_{i'}}(\overline{p}) \setminus \{p^0 \}) = \emptyset$.
    \end{itemize}

    \smallskip
    \noindent Let $v \in S \cap Q_{C_i}(p)$ according to the assumption. For Item~(a), suppose by contradiction that there exists $v' \in S \cap (Q_{C_i} \setminus Q_{C_i}(p))$. Recall that $S$ is a $C$-independent set, so let $u \in \hullc (S) \setminus\displaystyle \bigcup_{a\in S}\hullc (S\setminus\{a\})$. Since $G[\hullc (S)]$ is connected (Lemma~\ref{lemma:cth_ind_set_cc}(c)), $v$ is not a leaf in $G[\hullc (S)]$ (Lemma~\ref{lemma:cth_ind_set_cc}(f)), and $\hullc (S) \cap W = \emptyset$ (Claim~3), we have that $v$ has two neighbors $v_1, v_2 \in \hullc (S) \cap Q_{C_i}$. The construction implies that $v_1,v_2 \in S$, then $|S \cap Q_{C_i}(p)| =  |\{v,v_1,v_2 \}| = 3$, which gives Item (b). In consequence $\hullc( \{v,v_1,v_2 \} ) = Q_{C_i}(p)$. Since $c_i \in \hullc (S \cap Q_{C_i})$ and $c_i \in  \hullc (S \cap Q_{C_i}(p))$ we get that $u \in \hullc (S )$ as well as $u \in \hullc (S \setminus \{v'\} )$, a contradiction. 

    For Item (c), given that $\hullc (\{v,v_1,v_2 \} ) = Q_{C_i}(p)$, if $v'' \in S \cap (Q_{C_{i'}}(\overline{p}) \setminus \{p^0 \})$, then $(Q_{C_{i'}}(\overline{p}) \setminus \{p^0 \}) \subseteq \hullc( \{v,v_1,v_2,v'' \} )$. Thus, the path $p^5p_i^4p^7p^0\overline{p}^7\overline{p}_{i'}^4\overline{p}^5$ implies that $p^6\overline{p}^6 \in \hullc( \{v,v_1,v_2,v'' \} )$, a contradiction since $S \cap W = \emptyset$ (recall Claim~3).

    \medskip
    \noindent \textbf{Claim~4.2.} Let $v \in S$. Define $Q_{D_j}(0) = \{d_j, d_j^2, d_j^3, d_j^4 \}$ and $Q_{D_j}(1) = \{d_j, d_j^5, d_j^6, d_j^7 \}$. For $s \in \{0,1\}$, if $v \in Q_{D_j}(s)$, then $S \cap (Q_{D_j} \setminus Q_{D_j}(s)) = \emptyset$ and $|S \cap Q_{D_j}(s)| = 3$.
    
    \smallskip
    \noindent The proof is analogous to that of Claims~4.1(a) and (b).

    \medskip
    Finally, Claim~5 is the last needed tool to define a truth assignment for $\phi$.
    
    \medskip
    \noindent \textbf{Claim~5.}  For every $1 \leq i \leq m$ and $1 \leq j < m$, both $S \cap Q_{C_i}$ and $S \cap Q_{D_j}$ are non-empty.

    \smallskip
    \noindent Let $T = (Q_{C_1} \cup \ldots \cup Q_{C_m}) \cup (Q_{D_1} \cup \ldots \cup Q_{D_{m-1}})$ and recall that $|S| = 6m-3$. Suppose by contradiction that $S \cap Q_{C_i} = \emptyset$, for some $i \in \{1, \dots, m\}$. By Claim~3, $S \cap (Z \cup W) = \emptyset$, and by Claims~4.1 and~4.2, $|S \cap Q_{C_i}| \leq 3$ and $|S \cap Q_{D_j}| \leq 3$. Thus, we obtain that $|S| \leq |(T \cap S) \cup \{c_0\}| \leq 3(m-1)+3(m-1)+1 = 6m-5$, a contradiction. Therefore $S \cap Q_{C_i} \neq \emptyset$. The reasoning for $S \cap Q_{D_j}$ is similar.

    \medskip

    By Claims~4.1, 4.2, and 5, we obtain that for every $i \in \{1, \dots, m\}$ and $j \in \{1, \dots, m-1\}$,  $|S \cap Q_{C_i}|, |S \cap Q_{D_j}| \in \{1,2,3\}$. Then it is clear that $\{ c_1, \dots, c_m\} \cup \{d_1, d_2, \dots, d_{m-1} \} \subseteq \hullc (S)$ and subsequently $c_0 \in \hullc (S )$. By contradiction, suppose that there is some $i \in \{1, \dots, m\}$ such that $|S \cap Q_{C_i}| = 1$. This implies that $|S| \leq 3(m-1)+3(m-1)+1+1 = 6m-4$, a contradiction. So, we have that $|S \cap V(G_{C_i})| \geq 2$, $|S \cap V(G_{D_j})| \geq 2$. In particular, if $c_i \in S$ and either $p^0$ or $p^5$ belong to $S$, then Lemma~\ref{lemma:cth_ind_set_cc}(f) implies that $S \cap (\{p_i^2, p_i^3, p_i^4, p^7, p^8 \}) \neq \emptyset$. Then defining a truth assignment where $t(p) = T$, for the literal $p \in C_i$ such that $S \cap (\{p_i^2, p_i^3, p_i^4, p^5, p^7, p^8 \}) \neq \emptyset$, and $t(p') = F$, for $p' \in C_i$, $p \neq p'$, we obtain a solution to $\phi$ with exactly one true literal per clause.    
\end{proof}

\section{Extremal Carath\'{e}odory Numbers}
\label{sec:extremal}

In this section, we investigate the Carathéodory number for small and large values, namely up to $3$, and near to the number of vertices of the graph.

\begin{proposition}\label{cthcycle}
    Let $G$ be a graph. It holds that: 
    \begin{enumerate}[label=\textnormal{(}\alph*\textnormal{)}]
        \item $\car(G)=1$ if and only if $G$ is a forest.    
        \item If $\{u,v\}$ is a hull set of $G$ for every edge $uv \in E(G)$, then $\car(G)=2$.        
    \end{enumerate}
\end{proposition}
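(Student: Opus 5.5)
Part (a) is proved in two directions, and part (b) by an upper bound through Lemma~\ref{lemma:cth_ind_set_cc}(d) together with a matching lower bound. Throughout, $G$ has at least one vertex, so every singleton $\{v\}$ is C-independent: $\hullc(\{v\})$ contains $v$, while the only set $\{v\}\setminus\{v\}=\emptyset$ has $\hullc(\emptyset)=\emptyset$. Hence $\car(G)\ge 1$ always.

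For (a), first suppose $G$ is a forest. No vertex lies on a cycle, so Lemma~\ref{lemma:cth_ind_set_cc}(b) rules out any C-independent set $S$ with $|S|\ge 2$, and therefore $\car(G)=1$. Conversely, suppose $G$ contains a cycle. I would take a shortest cycle $C=v_1v_2\cdots v_k$; being shortest, it is an induced cycle. Let $S=\{v_1,v_2\}$. Then $\intv(S)=S$, because a vertex $w$ outside $S$ forming a cycle with $v_1,v_2$ would have to be adjacent to both. The key fact is that $\hullc(S)\supseteq V(C)$ whenever two adjacent vertices of $C$ lie in a set, by propagation along $C$. This is not immediate for a pair, since a cycle through $v_3$ with vertices from $\{v_1,v_2\}$ needs $v_3$ adjacent to both. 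So I would change the choice of $S$: let $S=V(C)\setminus\{v_k\}$. This is a path, hence a forest, and $v_k\in\intv(S)$ via $C$. For each $a\in S$, the set $S\setminus\{a\}$ induces a disconnected path or a shorter path inside the induced cycle $C$, and I must check that $v_k\notin\hullc(S\setminus\{a\})$. That check is the obstacle, because other parts of $G$ could create cycles.

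To get around this, I would choose a better witness. Take $S=\{v_1,v_2\}$ with $v_1v_2$ an edge on some cycle, and instead look for a vertex $w$ adjacent to both endpoints of a path. Simplest is to pick $S$ of size $2$ such that some vertex outside $S$ lies in $\intv(S)$. If $G$ contains a triangle $v_1v_2w$, then $S=\{v_1,v_2\}$ has $w\in\hullc(S)$, while $\hullc(\{v_1\})\cup\hullc(\{v_2\})=\{v_1,v_2\}$, so $\car(G)\ge 2$. In general, take an induced cycle $C$ of length $k\ge 4$ and $S=\{v_1,v_3\}$. Then $v_2\in\intv(S)$ only if $v_1v_3$ is an edge, which is false. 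So I would accept the path choice $S=V(C)\setminus\{v_k\}$ and argue minimality abstractly instead: choose a C-set $S'\subseteq S$ that is minimal with respect to $v_k\in\hullc(S')$. Since $\hullc(S')\ne S'$, we have $|S'|\ge 2$. By minimality, $v_k\notin\hullc(S'\setminus\{a\})$ for every $a\in S'$, so $S'$ is C-independent and $\car(G)\ge 2$. This minimal-subset trick is the clean route: it proves the general principle that any non-convex set contains a C-independent subset of size at least $2$. Hence $\car(G)=1$ forces every set to be convex, in particular $V(C)\setminus\{v_k\}$, which contradicts $G$ having a cycle.

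For (b), the hypothesis forces $G$ to have an edge, and indeed a cycle, since $\hullc(\{u,v\})=V(G)\ne\{u,v\}$ once $G$ has more than two vertices. So (a) gives $\car(G)\ge 2$. For the upper bound, let $S$ be C-independent with $|S|\ge 3$. By Lemma~\ref{lemma:cth_ind_set_cc}(a), $S$ contains an edge $uv$. Then $\{u,v\}$ is a proper subset of $S$ with $\hullc(\{u,v\})=V(G)$, contradicting Lemma~\ref{lemma:cth_ind_set_cc}(d). Hence $\car(G)=2$. The only delicate point in (b) is the degenerate case $G=K_2$, which the hypothesis technically covers; I would note that there $\car(G)=1$, so $G$ should be assumed to have a cycle, which holds whenever $|V(G)|\ge 3$.
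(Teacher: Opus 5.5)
Your final argument is correct, but for the nontrivial direction of (a) it takes a different route from the paper. The paper fixes an induced cycle $C=v_1v_2\cdots v_\ell$ and asserts outright that $\{v_2,\dots,v_\ell\}$ is C-independent with witness $v_1$, which would give the stronger bound $\car(G)\ge \ell-1$. That assertion is exactly the verification you flagged as the obstacle, and for an arbitrary induced cycle it can fail. For example, add to an induced $4$-cycle $v_1v_2v_3v_4$ a vertex $w$ adjacent to $v_1,v_3,v_4$; then $\hullc(\{v_3,v_4\})=V(G)$. The claim does hold if $C$ is a shortest cycle, since then any set of at most $\ell-2$ vertices of $C$ is convex: a vertex generated by it would close a cycle of length at most $\ell-1$. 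Your minimal-subset trick needs no such check. You shrink $V(C)\setminus\{v_k\}$ to an inclusion-minimal $S'$ with $v_k\in\hullc(S')$; this $S'$ is non-convex, hence $|S'|\ge 2$ because $\emptyset$ and singletons are convex. The trick in fact shows that every non-convex set contains a C-independent subset of size at least $2$. It only yields $\car(G)\ge 2$, but that is all (a) requires. Your forest direction (via Lemma~\ref{lemma:cth_ind_set_cc}(b) rather than ``every subset is convex'') and your part (b) are essentially the paper's arguments. In a write-up, drop the abandoned attempts, notably the false ``key fact'' about two adjacent cycle vertices. Also note that the hypothesis of (b) holds vacuously for $K_1$, so it does not force an edge. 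Both $K_1$ and $K_2$ are degenerate exceptions to (b), and the paper's proof overlooks them as well.
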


\begin{proof}
    \begin{enumerate}[label=(\alph*)]
    \item If $G$ is acyclic, then every subset of $V(G)$ is convex. Consequently, the only $C$-independent sets in $G$ are the singleton vertex sets. 
Now, suppose that $G$ contains a cycle. Let $C = v_1v_2\cdots v_\ell$, with $\ell \geq 3$, be an induced cycle in $G$. We may choose $S = \{v_2, \dots, v_\ell\}$. Then it is clearly that $v_1 \in \hullc (S)$. On the other hand, for each $i \in \{2, \dots, \ell\}$, we have
$
v_1 \notin \hullc (S \setminus \{v_i\} ).
$
Thus,
$
v_1 \notin\displaystyle \bigcup_{i \in \{2, \dots, \ell\}} \hullc (S \setminus \{v_i\}).$ It follows that $S$ is a $C$-independent set in $G$, and hence $\car(G) \geq |S| = \ell - 1 \geq 2$.
    
    \item Let $G$ be a graph in which every pair of adjacent vertices forms a hull set. Suppose, for a contradiction, that $G$ contains a $C$-independent set $S$ with $|S| \geq 3$. By Lemma~\ref{lemma:cth_ind_set_cc}(a), the induced subgraph $G[S]$ contains at least one edge, say $uv$. By assumption, every pair of adjacent vertices is a hull set, and hence $\hullc( \{u,v\} ) = V(G)$. Since $\{u,v\} \subsetneq S$, this contradicts Lemma~\ref{lemma:cth_ind_set_cc}(d). Hence $\car(G) \leq 2$. Moreover, since no pair of adjacent vertices forms a hull set in a forest, it follows that $G$ is not a forest. By Item~(a), we have $\car(G) \geq 2$. Combining this with the previous bound, we find that $\car(G) = 2$. \qedhere
    \end{enumerate}
\end{proof}
The converse of Proposition~\ref{cthcycle}(b) does not hold in general. As an illustration, consider the house graph, obtained from a cycle on four vertices by adding a fifth vertex adjacent to two consecutive vertices of the cycle. It is straightforward to verify that this graph has the Carathéodory number $2$, yet there exist edges that are not hull sets in $G$. 

Next, we deal with the disjoint union and join operations on general graphs.

\begin{proposition}\label{prop:max_cth}
Let $G_1$ and $G_2$ be graphs. Then $\car(G_1 \cup G_2) = \max\{\car(G_1), \car(G_2)\}.$
\end{proposition}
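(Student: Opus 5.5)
We prove the two inequalities separately, using one structural fact: cycle convexity acts independently on the two components. Write $G = G_1 \cup G_2$. Every cycle of $G$ lies entirely inside $G_1$ or entirely inside $G_2$, so for any $S \subseteq V(G)$ we have $\intv(S) = \intv_{G_1}(S\cap V(G_1)) \cup \intv_{G_2}(S \cap V(G_2))$. Iterating this identity gives
\[
\hullc(S) = \hullc^{G_1}(S_1) \cup \hullc^{G_2}(S_2), \qquad S_i = S \cap V(G_i),
\]
where $\hullc^{G_i}$ denotes the hull computed in $G_i$. This decomposition is the first step, and the rest follows from it. (The standing assumption that graphs are connected must be relaxed here; nothing in the definitions needs connectivity.)

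\emph{Lower bound.} Let $S$ be a C-independent set of $G_1$ with $|S| = \car(G_1)$, and let $p \in \hullc^{G_1}(S) \setminus \bigcup_{a\in S}\hullc^{G_1}(S\setminus\{a\})$ be a witness. Since $S \subseteq V(G_1)$, the decomposition shows that every hull of a subset of $S$ in $G$ coincides with its hull in $G_1$. Hence the same $p$ witnesses that $S$ is C-independent in $G$. The same argument applies to $G_2$, so $\car(G) \geq \max\{\car(G_1),\car(G_2)\}$.

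\emph{Upper bound.} This is the step that needs care. Let $S$ be C-independent in $G$ with witness $p$, and suppose without loss of generality that $p \in V(G_1)$. By the decomposition, $p \in \hullc^{G_1}(S_1)$, since the part of the hull inside $V(G_1)$ comes only from $S_1$.
\begin{itemize}
\item If $S_2 \neq \emptyset$, pick $a \in S_2$. Then $\hullc(S\setminus\{a\}) \supseteq \hullc^{G_1}(S_1) \ni p$, contradicting the choice of $p$. Therefore $S = S_1 \subseteq V(G_1)$.
\item Once $S \subseteq V(G_1)$, removing any $a \in S$ gives $\hullc(S\setminus\{a\}) \cap V(G_1) = \hullc^{G_1}(S\setminus\{a\})$. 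So $p$ also witnesses that $S$ is C-independent in $G_1$.
\end{itemize}
It follows that $|S| \leq \car(G_1)$. A singleton is always C-independent, with its single vertex as witness against the empty set, so both maxima are at least $1$. This yields the claimed equality $\car(G_1 \cup G_2) = \max\{\car(G_1), \car(G_2)\}$.

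Alternatively, the upper bound can be obtained from Lemma~\ref{lemma:cth_ind_set_cc}(c): for $|S|\ge 2$, $\hullc(S)$ is connected and therefore lies in a single component, which again forces $S \subseteq V(G_1)$. The direct argument above avoids depending on that lemma.
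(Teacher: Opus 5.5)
Your proof is correct, and it takes a more elementary route than the paper's.

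The paper's proof is one step. It invokes Lemma~\ref{lemma:cth_ind_set_cc}(c), that $G[\hullc(S)]$ is connected for a C-independent $S$. That fact is imported from Barbosa et al.\ with only the remark that the same reasoning applies. The paper uses it to place $S$ inside $V(G_1)$ or $V(G_2)$ and then calls the bound immediate. Two points are left implicit there. The first is the transfer of C-independence between $G$ and $G_i$ in both directions, which is exactly your identity $\hullc(S)=\hullc^{G_1}(S_1)\cup\hullc^{G_2}(S_2)$. The second is the case $|S|=1$, which the lemma excludes.

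You instead prove that decomposition and then use only monotonicity of the hull. Any element of $S$ outside the part containing the witness $p$ can be deleted without losing $p$, so $S$ lies in one part and the same $p$ witnesses C-independence there. This is in effect the proof of item (c) specialized to a setting where the component structure is known in advance. As a result, your argument is self-contained, needs no restriction on $|S|$, and does not depend on the paper's merely cited proof of (c). It also states both inequalities and the relaxation of the connectivity convention explicitly.

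The paper's version is shorter only because it outsources exactly these points. Your closing remark, which recovers the upper bound from Lemma~\ref{lemma:cth_ind_set_cc}(c), is precisely the paper's argument.
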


\begin{proof} 
Let $S$ be a $C$-independent set of $G = G_1 \cup G_2$. By Lemma~\ref{lemma:cth_ind_set_cc}(c) $G[\hullc (S)]$ is connected. Then either $S \subseteq V(G_1)$ or $S \subseteq V(G_2)$ and the lower bound is immediate.
\end{proof}

For the next, we denote by $\trivial(G)$ and $\nt(G)$ the number of trivial and nontrivial connected components of a graph $G$, respectively.

\begin{proposition}\label{prop:join}
Let $G_1$ and $G_2$ be graphs. Then $\car(G_1 + G_2) \leq 3$. In particular,
$$
\car(G_1 + G_2) =
\begin{cases}
\begin{aligned}
1, & \  \text{if } |V(G_1)| = 1 \text{ and } |E(G_2)| = 0; && \quad \textnormal{(i)} \\[0.15cm]
3, & \ \text{if } |V(G_1)| = 1 \text{ and } \nt(G_2) \geq 2, \text{ or } && \quad \textnormal{(ii)} \\
   & \ \quad |V(G_1)| \geq 2,\; \trivial(G_i) \geq 1 \text{ and } \trivial(G_j) \geq 2, \text{ for } i,j \in \{1,2\}, i \neq j; && \quad \textnormal{(iii)} \\
2, &\ \text{otherwise}. && \quad \textnormal{(iv)}
\end{aligned}
\end{cases}
$$
\end{proposition}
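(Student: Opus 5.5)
The plan is to first establish a few structural facts about cycle convex hulls in $G = G_1+G_2$, and then derive the upper bound and the case analysis from them using Lemma~\ref{lemma:cth_ind_set_cc}(d).

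\textbf{Fact A.} If $S$ contains two vertices $u,v$ on one side, say $V(G_1)$, and a vertex $w$ of $V(G_2)$, then every $x \in V(G_2)\setminus\{w\}$ lies on the $4$-cycle $uwvx$, so $V(G_2)\subseteq \hullc(S)$. If moreover $|V(G_2)|\geq 2$, then every $y\in V(G_1)$ lies on a $4$-cycle $yxuw$ with $x,w\in V(G_2)$, and hence $\hullc(S)=V(G)$.

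\textbf{Fact B.} When $G_1$ is a single vertex $w$, the following description holds. For $S \ni w$, a vertex $x\in V(G_2)$ adjacent in $G_2$ to a hull vertex $y$ lies on the triangle $xwy$. Hence $\hullc(S)$ is $\{w\}$ together with the union of the components of $G_2$ that meet $S$. For $S \not\ni w$, the vertex $w$ is generated exactly when two vertices of $S$ lie in a common component of $G_2$ (via a path in the hull). Once $w$ is generated, we are back in the previous situation.

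\textbf{Upper bound $\car \leq 3$.} Let $S$ be C-independent with $|S|\geq 4$. If both $|V(G_1)|,|V(G_2)|\geq 2$ and $S$ meets both sides, then some $3$-subset of $S$ has two vertices on one side and one on the other. By Fact A this $3$-subset is a hull set, contradicting Lemma~\ref{lemma:cth_ind_set_cc}(d). If $S$ lies inside one side, say $V(G_1)$, then two vertices of $S$ that are adjacent or joined inside the hull generate all of $V(G_2)$, and we reduce to the previous case; otherwise no vertex of $V(G_2)$ is generated except through pairs. I will check directly that then $\hullc$ of some proper subset already contains any candidate vertex. In the case $|V(G_1)|=1$, Fact B shows that $\hullc(S)$ is determined by which components of $G_2$ are met, together with $w$. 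A vertex outside $\bigcup_a \hullc(S\setminus\{a\})$ must lie in a component of $G_2$ met by exactly one vertex $a$ of $S$, with $w$ generated only by using $a$. Counting then forces $|S|\leq 3$.

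\textbf{Exact values.} Case (i) is a star, which is a forest, so Proposition~\ref{cthcycle}(a) gives the value $1$. Otherwise $G$ contains a cycle, so $\car\geq 2$ by Proposition~\ref{cthcycle}(a). For (ii), I take $S=\{w,a,b\}$ with $a,b$ in distinct nontrivial components $K_a,K_b$ of $G_2$. Then $\hullc(S)=\{w\}\cup K_a\cup K_b$. Removing $w$ leaves an independent convex pair. Removing $a$ or $b$ loses the corresponding component. So any $x\in K_a\setminus\{a\}$ witnesses C-independence. For (iii), I take an isolated vertex $s$ of $G_i$ and two isolated vertices $t,t'$ of $G_j$, and let $S=\{s,t,t'\}$. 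The hull of any pair inside $S$ is just a $3$-vertex path or the pair itself, while by Fact A $\hullc(S)=V(G)$; I will verify this for each pair. For (iv), I must show that no $3$-set is C-independent. Here the conditions fail, so by the facts above every edge, or every relevant pair, either has large hull or is trapped within one component. Any candidate $3$-set then has a $2$-subset whose hull already equals the hull of the whole set, or covers the candidate vertex.

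The main obstacle is the ``otherwise'' case (iv), together with the subcase of the upper bound where $S$ lies entirely on one side. There the hull depends on the connectivity of $S$ inside $G_i$, and isolated vertices behave differently from vertices in nontrivial components. I would organize this step by the number of trivial components $\trivial(G_i)$ and nontrivial components $\nt(G_i)$ on each side, and use Lemma~\ref{lemma:cth_ind_set_cc}(a),(d) to discard most configurations quickly.
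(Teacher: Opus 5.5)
\textbf{The construction for case (ii) is wrong.} Take $w$ the universal vertex and $S=\{w,a,b\}$. Removing $b$ leaves $\{w,a\}$, and by your own Fact B, $\hullc(\{w,a\})=\{w\}\cup K_a$. So your proposed witness $x\in K_a\setminus\{a\}$ lies in $\hullc(S\setminus\{b\})$: removing $b$ loses $K_b$, not $K_a$. Symmetrically, $\hullc(\{w,b\})\supseteq K_b$. Hence $\hullc(S\setminus\{a\})\cup\hullc(S\setminus\{b\})=\hullc(S)$, and $S$ is C-dependent.

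The same reasoning shows more generally that any set containing $w$ and at least two vertices of $G_2$ is C-dependent. A C-independent $3$-set for (ii) must therefore avoid $w$. The correct choice, which is the paper's, is $S=\{a,a',b\}$ with $aa'\in E(K_a)$ and $b\in K_b$. The edge $aa'$ generates $w$, and then $w$ generates $K_a\cup K_b$, so $\hullc(S)=\{w\}\cup K_a\cup K_b$. On the other hand, $\hullc(\{a,a'\})=\{w\}\cup K_a$, and $\{a,b\},\{a',b\}$ are independent, hence convex. Any vertex of $K_b\setminus\{b\}$, which exists because $K_b$ is nontrivial, is then a witness.

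\textbf{The upper-bound witness description has the same flaw.} You say a witness requires ``$w$ generated only by using $a$''. In the correct example above, $w$ is generated by $\{a,a'\}$ without $b$, so the counting you build on this description is not reliable. The bound is easier to argue directly. If $w\in S$ and $|S|\ge 3$, then $S$ is dependent as shown above. If $w\notin S$, Lemma~\ref{lemma:cth_ind_set_cc}(a) gives an edge $xy$ in $G[S]$. By Fact B, every vertex of $\hullc(S)$ then lies in $\hullc(\{x,y,z\})$ for some $z\in S$, and this is a proper subset when $|S|\ge 4$.

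\textbf{Case (iv) is only a slogan.} The missing observation is the following, for the case where both sides have at least two vertices:
\begin{itemize}
\item A cross edge $uw$ is a hull set unless $u$ and $w$ are isolated in their respective factors. Otherwise a common neighbour is generated, giving two vertices on one side, and Fact A applies.
\item A pair on one side is either an edge, which forms a triangle with any third vertex on the other side and is excluded by Lemma~\ref{lemma:cth_ind_set_cc}(e), or convex.
\end{itemize}
Together with Lemma~\ref{lemma:cth_ind_set_cc}(a),(d),(e), this forces any C-independent $3$-set to be an induced path $uwv$ with $u,v,w$ all isolated in their factors, which is exactly condition (iii).

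When one side is a single vertex and $\nt(G_2)=1$, the isolated vertices of $G_2$ are leaves of $G$ and are excluded by Lemma~\ref{lemma:cth_ind_set_cc}(f). Every edge of $G_1+K$, where $K$ is the nontrivial component of $G_2$, is a hull set of that subgraph.

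A minor point on (iii): the pair hulls there are the pairs themselves, not $3$-vertex paths. Also, the witness needs $|V(G_i)|\ge 2$ on the side containing $s$.
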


\begin{proof}
    For the upper bound, let $S$ be a $C$-independent set of $G = G_1 + G_2$ and assume by contradiction that $|S| \geq 4$, say $u_1, \dots, u_4 \in S$. First, suppose that $S$ intersects at most one of $G_1$ and $G_2$, say $S\subseteq V(G_1)$. Since $S$ is a non-convex set of $G$, the induced subgraph of $S$ in $G$ has at least one edge, say $u_1u_2$. This implies that $V(G_2) \subseteq \intv(\{u_1,u_2\})$ and consequently $V(G_1) \setminus \{u_1,u_2\} \subseteq \intv(V(G_2) \cup \{u_1,u_2\})$. 
    Then $\{u_1, u_2\}$ is a hull set of $G$, a contradiction to the fact that $S$ is $C$-independent in $G$. This proves that $S$ intersect both $G_1$ and $G_2$. Since the induced subgraph of $S$ in $G$ has no cycles, without loss of generality, we may assume that $u_1,u_2,u_3 \in V(G_1)$ and $u_4 \in V(G_2)$. 
    If $|V(G_2)| \geq 2$, then $V(G_2) \setminus \{u_4\} \subseteq \intv(\{u_1,u_2,u_4\})$. Then $V(G_1) \setminus \{u_1,u_2\} \subseteq \intv(V(G_2) \cup \{u_1,u_2\})$. Thus,  $\{u_1, u_2, u_4\}$ is a hull set of $G$, a contradiction.
    Now, consider that $|V(G_2)| = |\{u_4\}| = 1$. By Lemma~\ref{lemma:cth_ind_set_cc}(f), we have that $\hullc (S)$ induces a graph with no leaves. Then, every vertex $u \in S \cap V(G_1)$ lies in a nontrivial connected component of $G_1$, say $G_u$. This implies that $\hullc (S) =\big( \bigcup_{u \in S \cap V(G_1)} V(G_u) \big) \cup \{u_4\} =  \bigcup_{a \in S} \hullc (S \setminus \{a\} )$, then $S$ is $C$-dependent, a contradiction. Thus, the upper bound $\car(G) \leq 3$ holds.

    For the equalities, first, it is enough to recall that Condition~(i) imply that $G$ is a star. Therefore, by Proposition~\ref{cthcycle}(a) $\car(G) = 1$.

    For Condition~(ii) consider $V(G_1) = \{u\}$ and $v,w$ two vertices that lie in distinct nontrivial components of $G_2$, say $G_v$ and $G_w$, respectively. Let $v' \in N_{G_2}(v)$ and $w' \in  N_{G_2}(w)$. Define $S = \{v,v',w\}$. Since $u$ is adjacent to every vertex in $V(G_2)$, we have that $u \in \intv(\{v,v'\})$. In consequence $w' \in \hullc (S ) = V(G_u) \cup V(G_v) \cup \{u\}$. On the other hand,  $w' \notin\displaystyle \bigcup_{a \in S} \hullc (S \setminus \{a\} ) = V(G_u) \cup \{w\}$. Then $S$ is a $C$-independent set and $\car(G) \geq 3$. 
    
    For Condition~(iii) let $|V(G_1)| \geq 2$, $u$ be an isolated vertex in $G_1$, $u' \in V(G_1) \setminus \{u\}$, and $v,w$ be two isolated vertices in $G_2$ (by symmetry we fixed $i = 1$ and $j = 2$). Let $S = \{u,v,w\}$. The cycle $v,u,w,u'$ implies that $u' \in \hullc (S)$. On the other hand, since $N_G(u) \subseteq V(G_2)$ and $N_G(v), N_G(w) \subseteq V(G_1)$, the edges $uv, uw$ do not have a common neighbor, then $\{u,v\}$ and $\{u,w\}$ are convex sets. Moreover, $\intv(\{v,w\}) = \{v,w\}$. Thus $u' \notin \displaystyle\bigcup_{a \in S} \hullc (S \setminus \{a\} )$. This shows that  $S$ is a $C$-independent set and $\car(G) \geq 3$. 

    Combining Conditions~(ii) and~(iii) with the upper bound established in the first paragraph, we conclude that $\car(G) = 3$. Next, let us analyze the contemplated and missing cases.
    
    Notice that if $|V(G_1)| = 1$, the case $|E(G_2)| = 0$ is captured by Condition~(i) and the case $|E(G_2)| \geq 1$ implies that $G_2$ has at least a nontrivial component, which is captured by Condition~(ii). Otherwise, $|V(G_1)| \geq 2$. If $G_1$ has a trivial component and $G_2$ has two trivial components (or vice versa), we get Condition~(iii). So, remain to consider the case $\trivial(G_i) < 1$ or $\trivial(G_j) < 2$, for distinct $i,j \in \{1,2\}$, that is, $\trivial(G_i) = 0$ or $\trivial(G_j) \in \{0,1\}$.

    If $\trivial(G_i) = 0$, say $i = 1$, then $G_1$ has no isolated vertices. If $|V(G_2)| = 1$, then by symmetry we fall back to Conditions~(i) and~(ii). Hence, we may assume that $|V(G_2)| \geq 2$. In the join $G = G_1 + G_2$, every edge $uv \in E(G)$ belongs to a $K_3$. Since $G$ is connected and $|V(G_1)|, |V(G_2)| \geq 2$, it follows that $\{u,v\}$ is a hull set of $G$, for every $uv \in E(G)$. Therefore, Proposition~\ref{cthcycle} implies that $\car(G) = 2$.
    
    Now, consider the remaining case $\trivial(G_j) \in \{0,1\}$, say $j = 2$. The case $\trivial(G_2) = 0$ was already handled above, so assume that $G_2$ has exactly one isolated vertex. Since $|V(G_1)|, |V(G_2)| \geq 2$, and the case $\trivial(G_1) = 0$ was also treated, we may assume that $\trivial(G_1) = 1$. Let $S$ be a $C$-independent set of $G$, and let $\{u\}$ and $\{v\}$ be the trivial components of $G_1$ and $G_2$, respectively. By construction, the edge $uv$ does not belong to any $K_3$, and hence $\{u,v\}$ is convex. By Lemma~\ref{lemma:cth_ind_set_cc}(g), we have $S \neq \{u,v\}$. On the other hand, the join implies that every edge $xy \in E(G) \setminus \{uv\}$ belongs to a $K_3$. Thus, any set $S$ with $|S| = 2$ and $S \neq \{u,v\}$ is $C$-independent. Moreover, by Lemma~\ref{lemma:cth_ind_set_cc}(d), no proper subset of $S$ is a hull set, which implies that $|S| \leq 2$. Therefore, $\car(G) = 2$.
\end{proof}

In the following, we determine the Carathéodory number of complete bipartite graphs, showing that the Carathéodory numbers corresponding to $\Delta$-convexity (which is $1$, see~\cite{anand2025caratheodory}) and cycle convexity (which is $3$, see Corollary~\ref{com_bi}) are intrinsically different. As a further remark, complete multipartite graphs fall into Case~(b) of Proposition~\ref{cthcycle}, and thus have cycle Carathéodory number equal to $2$.

\begin{corollary}\label{com_bi} For integers $m,n \geq 2$, $\car(K_{m,n})=3$.
\end{corollary}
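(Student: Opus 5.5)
The plan is to realize $K_{m,n}$ as a join and apply Proposition~\ref{prop:join}. Write $K_{m,n} = G_1 + G_2$, where $G_1 = \overline{K_m}$ and $G_2 = \overline{K_n}$ are edgeless graphs on $m$ and $n$ vertices. Proposition~\ref{prop:join} immediately gives the upper bound $\car(K_{m,n}) \leq 3$. So it only remains to check that we fall into Condition~(iii).

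Since $m \geq 2$, we have $|V(G_1)| \geq 2$. Every vertex of an edgeless graph is a trivial component, so $\trivial(G_1) = m \geq 2 \geq 1$ and $\trivial(G_2) = n \geq 2$. Condition~(iii) therefore holds with $i=1$ and $j=2$. We must also make sure Conditions~(i) and~(ii) do not interfere, since they require $|V(G_1)|=1$ (or, by symmetry, $|V(G_2)|=1$), which is excluded by $m,n\ge 2$. Hence $\car(K_{m,n}) = 3$.

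For a self-contained check of the lower bound, take $u, u'$ in the part of size $m$ and $v, w$ in the part of size $n$, and set $S=\{u,v,w\}$.
\begin{itemize}
\item The $4$-cycle $v\,u\,w\,u'$ puts $u'$ in $\hullc(S)$.
\item In the bipartite graph $K_{m,n}$, an edge such as $uv$ lies in no triangle, so $\{u,v\}$ and $\{u,w\}$ are convex.
\item The set $\{v,w\}$ is independent, hence convex.
\end{itemize}
Thus $u'$ is not in $\bigcup_{a\in S}\hullc(S\setminus\{a\})$, and $S$ is C-independent.

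No step is a real obstacle. The only point needing care is that Proposition~\ref{prop:join} is stated for arbitrary (possibly disconnected) $G_1, G_2$, so applying it to edgeless factors is legitimate. Its proof also required Condition~(iii) to be distinguished from the ``otherwise'' case, which holds here since both trivial-component counts are large.
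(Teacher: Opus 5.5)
Your proof is correct and follows the paper's argument: write $K_{m,n}\simeq \overline{K}_m+\overline{K}_n$ and invoke Condition~(iii) of Proposition~\ref{prop:join}, which gives both the upper bound $3$ and equality. Your self-contained check with $S=\{u,v,w\}$ just reproduces the paper's verification of that condition.
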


\begin{proof}
    Let $K_{m,n} \simeq \overline{K}_m + \overline{K}_n$ be a complete bipartite graph. Since $m \geq 2$, $\trivial(\overline{K}_m) \geq 1$ and $\trivial(\overline{K}_n) \geq 2$, by Proposition~\ref{prop:join}(iii) we get that $\car(K_{m,n})=3$.
\end{proof}

A graph $G$ is $P_4$\textit{-sparse} if no set of five vertices in $G$ induces more than one induced $P_4$. The $P_4$-sparse graphs and can be described by disjoint unions, joins and spiders and graphs generalize both cographs and $P_4$-reducible graphs~\cite{hoang1985perfect,jamison1992recognizing}. To determine an upper bound to the Carathéodory number of $P_4$-sparse graphs we use the definition and the result in sequel.

A \textit{spider} is a graph whose vertex set can be partitioned into $I$, $K$, and $R$, where 
$I = \{s_1, \ldots, s_k\}$ ($k \geq 2$) is an independent set, 
$K = \{k_1, \ldots, k_k\}$ is a clique, and either $s_i$ is adjacent to $k_j$ if and only if $i = j$ (a \textit{thin spider}), or $s_i$ is adjacent to $k_j$ if and only if $i \neq j$ (a \textit{thick spider}). 
The set $R$, called the \textit{head}, is allowed to be empty and, if it is not, then every vertex in $R$ is adjacent to all vertices in $K$ and non-adjacent to all vertices in $I$. 
The triple $(I,K,R)$ is called the \textit{spider partition}, and can be found in linear time~\cite{jamison1992recognizing}.

\begin{lemma}\label{lemma:spider}
Let $G = (I,K,R)$ be a spider with which is not a cograph. Then $\car(G) \leq 2$.
\end{lemma}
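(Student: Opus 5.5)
The approach is to show that any $C$-independent set $S$ with $|S| \geq 3$ contains an edge $xy$ with $\hullc(\{x,y\})=\hullc(S)$. Lemma~\ref{lemma:cth_ind_set_cc}(d) only forbids proper subsets whose hull is $V(G)$, so it does not apply when $\hullc(S)\neq V(G)$. In that case I argue directly: every $a\in S\setminus\{x,y\}$ satisfies $\{x,y\}\subseteq S\setminus\{a\}$, hence $\hullc(S)=\hullc(S\setminus\{a\})$. So $S$ is $C$-dependent, a contradiction.

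First I note the structure forced by ``not a cograph''. Since $k\geq 2$, the thin spider with $k=2$ already contains an induced $P_4$, namely $s_1k_1k_2s_2$, and the thick spider with $k=2$ coincides with the thin one. So I split into two cases: (A) $G$ is thin, or thick with $k=2$; (B) $G$ is thick with $k\geq 3$.

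The basic computation is that every edge $xy$ inside $K\cup R$ generates all of $K\cup R$.
\begin{itemize}
\item If $x,y\in K$, every other vertex of $K$ and every vertex of $R$ is adjacent to both, so it forms a triangle with $x,y$.
\item If $x\in K$ and $y\in R$, every $k'\in K\setminus\{x\}$ forms the triangle $k'xy$. Once a second vertex of $K$ is present, the previous bullet yields $R$.
\item If $x,y\in R$, every vertex of $K$ forms a triangle with $x,y$.
\end{itemize}
Here $k\geq 2$ guarantees that a second clique vertex exists.

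\textbf{Case (A).} Each $s_i$ has degree $1$. By Lemma~\ref{lemma:cth_ind_set_cc}(b), every vertex of $S$ lies on a cycle, so $S\cap I=\emptyset$. Moreover, a leaf never lies on a cycle, so no $s_i$ is ever generated. Hence $K\cup R$ is cycle convex and $\hullc(S)\subseteq K\cup R$. By Lemma~\ref{lemma:cth_ind_set_cc}(a), $G[S]$ has an edge $xy$, which lies in $K\cup R$. By the computation above, $\hullc(\{x,y\})=K\cup R=\hullc(S)$. With $|S|\geq 3$, the general observation gives a contradiction.

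\textbf{Case (B).} I show that every edge of $G$ is a hull set and then apply Proposition~\ref{cthcycle}(b). For an edge inside $K\cup R$, its hull contains $K$. Each $s_i$ is adjacent to $k-1\geq 2$ vertices of $K$, which are mutually adjacent, so $s_i$ lies in a triangle with two of them and is generated. Thus the hull is $V(G)$. For an edge $s_ik_j$ with $i\neq j$, choose $l\notin\{i,j\}$, which exists because $k\geq 3$. Then $k_l$ is adjacent to both $s_i$ and $k_j$, so the triangle $s_ik_jk_l$ puts $k_l$ in the hull. The edge $k_jk_l$ then generates everything as before. Hence every edge is a hull set and $\car(G)\leq 2$.

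The main point needing care is Case (A). There the edges are not hull sets, since the leaves $s_i$ are never generated, so neither Proposition~\ref{cthcycle}(b) nor Lemma~\ref{lemma:cth_ind_set_cc}(d) applies directly. This is why I use the direct dependence argument based on $\hullc(\{x,y\})=\hullc(S)$, together with the convexity of $K\cup R$.
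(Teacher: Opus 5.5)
Your proof is correct and follows essentially the paper's route. You split into thin spiders, where the degree-one vertices of $I$ cannot lie in $\hullc(S)$, so everything happens inside $K\cup R$, and thick spiders with $k\ge 3$, where every edge is a hull set and Proposition~\ref{cthcycle}(b) applies. Your direct observation that an edge $xy$ of $G[S]$ already satisfies $\hullc(\{x,y\})=K\cup R=\hullc(S)$ makes rigorous the step the paper handles by citing Proposition~\ref{cthcycle}(b) for $G[K\cup R]$. Your remark that the thick spider with $k=2$ is a thin one also covers a case the paper passes over.
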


\begin{proof}
If $G$ is isomorphic to a tree, then $\car(G) = 1$ by Proposition~\ref{cthcycle}(a).
Thus, assume that $G$ has a cycle. Since $G$ is not a cograph, we have that $|I| = |K| \geq 2$. 
Let $S$ be a $C$-independent set of $G$.
By definition, $G[K \cup R]$ is a join $K + R$. Then every edge $uv \in E(G[K \cup R])$ belongs to a $K_3$ (even if $R = \emptyset)$ and consequently $\{u,v\}$ is a hull set of $G[K \cup R]$. Then Proposition~\ref{cthcycle}(b) implies that $\car(G[K \cup R]) = 2$. By Lemma~\ref{lemma:cth_ind_set_cc}(f), the graph $G[\langle S \rangle]$ has no leaves.
If $G$ is a thin spider, then every vertex in $I$ has degree one in $G$. Hence, no vertex of $I$ can belong to $\hullc (S)$, and therefore $S \cap I = \emptyset$. It follows that $S \subseteq K \cup R$, and by Proposition~\ref{cthcycle}(b), we have $\car(G) = 2$.
Now, assume that $G = (I,K,R)$ is a thick spider, with $|S| = |K| \geq 3$. In this case, every edge $xy \in E(G)$ with $x \in I$ and $y \in K$ belongs to a $K_3$. Hence, $\{x,y\}$ is a hull set of $G$. Therefore, again by Proposition~\ref{cthcycle}(b), we conclude that $\car(G) = 2$.
\end{proof}

\begin{corollary}
If $G$ is a $P_4$-sparse graph, then $\car(G) \leq 3$.
\end{corollary}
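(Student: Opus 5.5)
We argue by structural induction, using the standard decomposition of $P_4$-sparse graphs: a graph $G$ is $P_4$-sparse if and only if it is a single vertex, or it is the disjoint union $G_1 \cup G_2$ or the join $G_1 + G_2$ of two $P_4$-sparse graphs, or it is a spider $(I,K,R)$ whose head $R$ induces a $P_4$-sparse graph~\cite{jamison1992recognizing}. We prove $\car(G) \leq 3$ along this decomposition. For a single vertex the bound is trivial.

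In the disjoint union case, Proposition~\ref{prop:max_cth} gives $\car(G_1\cup G_2)=\max\{\car(G_1),\car(G_2)\}$, so the bound follows from the induction hypothesis. In the join case, Proposition~\ref{prop:join} gives $\car(G_1+G_2)\leq 3$ directly, without any hypothesis on the factors, so induction is not even needed there.

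The spider case is the main obstacle. Lemma~\ref{lemma:spider} bounds $\car(G)\le 2$ only for spiders that are not cographs. A spider with $|I|=|K|\geq 2$ contains an induced $P_4$: in a thin spider take $s_1k_1k_2s_2$, and in a thick spider with $|K|\geq 3$ take $s_1k_2k_1s_2$. So every genuine spider that is not a cograph is covered by Lemma~\ref{lemma:spider}. The remaining spiders are cographs, namely thick spiders with $|K|=2$, which coincide with thin ones. These do not arise as separate atoms of the decomposition, because a cograph is itself built from single vertices by disjoint unions and joins alone. Hence we first check whether $G$ is a cograph, and if so we apply the union/join argument instead of Lemma~\ref{lemma:spider}.

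The one remaining point is that Lemma~\ref{lemma:spider} must hold regardless of what the head $R$ is. Its proof never uses any structure of $R$. It only uses that every edge of $K+R$ lies in a triangle, so that Proposition~\ref{cthcycle}(b) applies, together with the leaf-freeness of $\hullc(S)$ from Lemma~\ref{lemma:cth_ind_set_cc}(f). The bound $2$ therefore holds for an arbitrary head, and combining the three cases proves $\car(G)\leq 3$.
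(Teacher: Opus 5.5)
Your proof is correct and follows the paper's route: induction along the $P_4$-sparse decomposition, using Proposition~\ref{prop:max_cth} for disjoint unions, Proposition~\ref{prop:join} directly for joins, and Lemma~\ref{lemma:spider} for spiders (and you rightly note that the induction hypothesis is not really needed in the spider case). One small correction: a thick spider with $|K|=2$ is not a cograph, since $s_1k_2k_1s_2$ is an induced $P_4$ in every thick spider with $|K|\geq 2$ (and every thin one contains $s_1k_1k_2s_2$), so no spider is a cograph and your cograph fallback is vacuous, though harmless.
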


\begin{proof}
The proof is by induction on $|V(G)|$. If $|V(G)| \leq 1$, the result is trivial. Since $G$ is $P_4$-sparse, we know by~\cite{hoang1985perfect} that $G$ is either:
\begin{enumerate}[label={\normalfont(\alph*)}]
    \item the disjoint union of two $P_4$-sparse graphs;
    \item the join of two $P_4$-sparse graphs;
    \item a spider whose head is either empty or a $P_4$-sparse graph.
\end{enumerate}

In Case~(a), the conclusion follows from Proposition~\ref{prop:max_cth} and the induction hypothesis. In Case~(b), the conclusion follows directly from Proposition~\ref{prop:join}. In Case~(c), it follows from Lemma~\ref{lemma:spider} and the induction hypothesis. Therefore, $\car(G) \leq 3$.
\end{proof}

The split graphs also have a small Carathéodory number.  A \textit{split graph} $G$ is a graph whose vertex set $V(G)$ can be partitioned into a clique $K$ and an independent set $I$.

\begin{proposition}\label{split} Let $G = (K \cup I, E)$ be a split graph. Then $\car(G) \leq 2$.
\end{proposition}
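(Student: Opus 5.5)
The plan is to argue by contradiction using Lemma~\ref{lemma:cth_ind_set_cc}. Suppose $S$ is a $C$-independent set of $G$ with $|S|\ge 3$. The goal is to find a pair $\{x,y\}\subsetneq S$ with $\hullc(\{x,y\})\supseteq \hullc(S)\setminus S$. Then every vertex of $\hullc(S)$ lies in $\bigcup_{a\in S}\hullc(S\setminus\{a\})$:
\begin{itemize}
\item vertices of $\hullc(S)\setminus S$ lie in $\hullc(S\setminus\{a\})$ for any $a\notin\{x,y\}$;
\item a vertex $s\in S$ lies in $\hullc(S\setminus\{a\})$ for any $a\ne s$.
\end{itemize}
This contradicts $C$-independence.

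First I would describe hulls in a split graph. A vertex $w\notin T$ lies in $\intv(T)$ exactly when $w$ has two neighbours in the same component of $G[T]$. Let $H=K\cup\{x\in I:\deg(x)\ge 2\}$.

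\textbf{Claim.} If $T$ contains two (necessarily adjacent) vertices $u,v\in K$, then $\hullc(T)\supseteq H$.

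Indeed, every $w\in K\setminus\{u,v\}$ forms a triangle with $u,v$, so $K\subseteq\hullc(T)$. Then any $x\in I$ with two neighbours in $K$ forms a triangle with them. Conversely, $H\cup T$ is convex: a vertex of $I$ of degree at most one lies on no cycle, and $K\subseteq H$. Hence $\hullc(T)=H\cup T$ whenever $|T\cap K|\ge 2$.

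Next I would split on $|S\cap K|$. By Lemma~\ref{lemma:cth_ind_set_cc}(e), $S$ induces a forest, so $|S\cap K|\le 2$.
\begin{itemize}
\item If $|S\cap K|=0$, then $S\subseteq I$ is independent, contradicting Lemma~\ref{lemma:cth_ind_set_cc}(a).
\item If $S\cap K=\{u,v\}$, take the pair $\{u,v\}$. By the Claim, $\hullc(\{u,v\})\supseteq H\supseteq \hullc(S)\setminus S$, and $\{u,v\}\subsetneq S$ since $|S|\ge3$. This gives the contradiction.
\item If $S\cap K=\{k\}$, then the components of $G[S]$ are a star centred at $k$ with leaves in $I$, plus isolated vertices of $I$.
\end{itemize}

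In the last case, by Lemma~\ref{lemma:cth_ind_set_cc}(g), $\intv(S)\ne S$, so some vertex $w$ is generated in the first step. Then $w$ has two neighbours in a single component of $G[S]$, which must be the star at $k$. A vertex of $I$ cannot have two such neighbours, because its neighbours lie in $K$ and only $k$ is available. So $w\in K$, and $w$ is adjacent to $k$ and to some leaf $x\in S\cap I$ of the star. Then $w\in\intv(\{k,x\})$, so $\hullc(\{k,x\})$ contains the adjacent clique vertices $k,w$. By the Claim, $\hullc(\{k,x\})\supseteq H\supseteq\hullc(S)\setminus S$, with $\{k,x\}\subsetneq S$. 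This contradicts $C$-independence as in the first paragraph.

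The main obstacle is the containment $\hullc(S)\subseteq H\cup S$. It needs care because $S\cap I$ may contain vertices of degree at most one that are not in $H$. The convexity of $H\cup T$ handles this: such vertices lie in $S$ itself and are therefore covered by $\hullc(S\setminus\{a\})$ for any $a$ other than themselves. Alternatively, Lemma~\ref{lemma:cth_ind_set_cc}(f) rules them out directly.
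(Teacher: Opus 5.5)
Your proof is correct, but it takes a different route from the paper's.

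\textbf{The paper's route.} The paper first disposes of trees. It then deletes the set $L$ of degree-one vertices of $I$; by Lemma~\ref{lemma:cth_ind_set_cc}(f), any $C$-independent set avoids $L$. It asserts that every edge of $G-L$ is a hull set of $G-L$ and concludes via Proposition~\ref{cthcycle}(b), and hence ultimately via Lemma~\ref{lemma:cth_ind_set_cc}(d).

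\textbf{Your route.} You stay in $G$ and use a weaker sufficient condition for $C$-dependence: a proper $2$-subset of $S$ whose hull covers $\hullc(S)\setminus S$. Your case split on $|S\cap K|$ replaces the paper's claim that every edge of $G-L$ is a hull set.
\begin{itemize}
\item Two clique vertices generate $H$.
\item In the one-clique-vertex case, non-convexity of $S$ yields a triangle $wkx$. This automatically selects a star leaf $x$ of degree at least two, so you never have to exclude $L$ explicitly.
\end{itemize}

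\textbf{What each approach buys.} Your route avoids a step the paper leaves unstated. That step is that a $C$-independent set of $G$ disjoint from $L$ stays $C$-independent in $G-L$: hulls of subsets of $V(G-L)$ are unchanged because vertices of $L$ lie on no cycle. Your route also needs no separate treatment of trees. The paper's route is shorter because it reuses Proposition~\ref{cthcycle}(b), at the cost of leaving those transfer details implicit.

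A small remark: your appeal to Lemma~\ref{lemma:cth_ind_set_cc}(e) to get $|S\cap K|\le 2$ is not actually needed. If $|S\cap K|\ge 3$, any two of those clique vertices serve as the pair exactly as in your $|S\cap K|=2$ case.
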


\begin{proof}
    If $G$ is a tree, the result $\car(G) = 1$ is known from Proposition~\ref{cthcycle}(a). So assume the opposite. Since $G$ has a cycle, which is, by definition a $K_3$, we may assume that $|K| \geq 2$. Let $L = \{v \in I :
     \deg(v) = 1\}$. It is easy to see that every edge $uv \in E(G)$ such that $u,v \notin L$ is a hull set of $G - L$. Since $S \cap L = \emptyset$ (by Lemma~\ref{lemma:cth_ind_set_cc}(f)), the conclusion $\car(G) = 2$ holds.
\end{proof}

It is clear that there is no graph $G$ of order $n \geq 2$ with Carath\'{e}odory number $n$. We therefore turn to the characterization of graphs of order $n \geq 3$ whose Carath\'{e}odory number is equal to $n-1$.
   \begin{theorem}\label{caratheodorycycle}
   Let $G$ be a connected graph of order $n\geq 3$. Then $\car(G) = n-1$ if and only if $G =C_n$.
\end{theorem}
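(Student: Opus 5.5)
The sufficiency direction is already contained in the proof of Proposition~\ref{cthcycle}(a). For $G=C_n$ with vertices $v_1v_2\cdots v_n$, the set $S=\{v_2,\dots,v_n\}$ generates $v_1$ through the whole cycle. Removing any $v_i$ leaves a path, which is convex, so $v_1\notin\hullc(S\setminus\{v_i\})$. Hence $\car(C_n)\geq n-1$. Since a C-independent set with $|S|\geq 2$ cannot be convex by Lemma~\ref{lemma:cth_ind_set_cc}(g), and $V(G)$ is convex, we get $\car(C_n)\le n-1$. So equality holds.

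For necessity, let $S$ be C-independent with $|S|=n-1$, and write $V(G)\setminus S=\{u\}$. By Lemma~\ref{lemma:cth_ind_set_cc}(g), $S$ is not convex, so $\hullc(S)=V(G)$. The witness vertex $p\in\hullc(S)\setminus\bigcup_a\hullc(S\setminus\{a\})$ cannot lie in $S$, because each $a\in S$ belongs to $\hullc(S\setminus\{b\})$ for any $b\neq a$ (here we use $|S|\geq 2$). Hence $p=u$. By Lemma~\ref{lemma:cth_ind_set_cc}(e), $G[S]=G-u$ is a forest.

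The key step is to analyse when $u\in\hullc(S\setminus\{a\})$. Since $u$ is the only vertex outside $S$, we have $u\in\hullc(S')$ for $S'\subseteq S$ exactly when $u$ lies on a cycle of $G[S'\cup\{u\}]$. Such a cycle is $u$ together with a path in the forest $G-u$ joining two neighbours of $u$. If we later add vertices of $S$ back, nothing further happens, because they are already present. So C-independence says two things. First, some cycle through $u$ exists. Second, every vertex $a\in S$ lies on every cycle through $u$, since otherwise deleting $a$ keeps a cycle through $u$ and gives $u\in\hullc(S\setminus\{a\})$.

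Now take one cycle $C$ through $u$. It must contain all vertices of $S$, hence all $n$ vertices, so $C$ is a Hamiltonian cycle. It remains to show there are no chords. A chord $xy$ with $x,y\in S$ would create a cycle in the forest $G-u$, which is impossible. A chord $uy$ would produce a shorter cycle through $u$ that misses some vertex of $S$, contradicting the second property above. Therefore $G=C_n$.

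The main obstacle is the observation that the hull of $S\setminus\{a\}$ can only gain $u$ itself, and gains it precisely through a single cycle. The rest is routine bookkeeping. We should also check the degenerate small case: for $n=3$, $|S|=2\geq2$, so Lemma~\ref{lemma:cth_ind_set_cc} still applies.
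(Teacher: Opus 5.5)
Your proof is correct and follows essentially the same route as the paper. In both, $G-u$ is a forest, so every cycle passes through $u$, and a cycle through $u$ that avoids some $a\in S$ puts $u$ into $\hullc(S\setminus\{a\})$, which forces every cycle to be Hamiltonian. Your version is slightly more explicit: you identify the witness as $u$ and rule out chords directly, whereas the paper leaves the chord-free step implicit in ``Therefore, $G$ must be a cycle.''
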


\begin{proof} First, suppose that there exists a vertex $u \in V(G)$ such that the set $S = V(G) \setminus \{u\}$ is $C$-independent in $G$. Then, by Proposition~\ref{cthcycle}(a),  $G$ contains at least one cycle, say $C$. Then it follows from Lemma~\ref{lemma:cth_ind_set_cc}(e) that $u \in V(C)$. Suppose that $V(G) \neq V(C)$. Then, for any vertex $v \in V(G) \setminus V(C)$, we have 
$
V(C) \subseteq \hullc (S \setminus \{v\} ).
$
In addition, for any vertex $w \in V(C)$, we have that 
$
V(G) \setminus V(C) \subseteq \hullc (S \setminus \{w\} ).$ Hence, $S$ is $C$-dependent, a contradiction. Therefore, $G$ must be a cycle. Conversely, it is clear that $\car(C_n) = n-1$.
\end{proof}




Next, we characterize all connected graphs $G$ of order $n \geq 4$ with Carath\'{e}odory number $n-2$. For this purpose, we introduce the following four families of graphs.

\begin{itemize}

\item The family $\mathscr{F}_1$ consists of all graphs obtained from a cycle by adding a new vertex $v$ and joining $v$ to at least one vertex of the cycle.

\item The family $\mathscr{F}_2$ consists of all $2$-connected bicyclic graphs.


\item The family $\mathscr{F}_3$ consists of all graphs obtained from a $(u,v)$-path and an edge $xy$ by adding all possible edges between the sets $\{x,y\}$ and $\{u,v\}$.


\item The family $\mathscr{F}_4$ consists of all connected bicyclic graphs that are not $2$-connected.

\end{itemize}

As a remark, the graphs in $\displaystyle\bigcup_{i=1}^4 \mathscr{F}_i$ can be recognized in polynomial time. This follows from the fact that the definitions of these families rely on detecting cycles, testing $2$-connectedness, and verifying join operations, all of which can be performed in polynomial time.

\begin{theorem}
  Let $G$ be a connected graph of order $n \geq 4$. Then $\car(G) = n-2$ if and only if 
$G \in \displaystyle\bigcup_{i=1}^4 \mathscr{F}_i.$
    \end{theorem}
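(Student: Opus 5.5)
We prove both directions, modelling the argument on the proof of Theorem~\ref{caratheodorycycle}.

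\emph{Lower bound for the families.} For each $G \in \bigcup_{i=1}^4 \mathscr{F}_i$ we exhibit a C-independent set of size $n-2$. The common shape is $S = V(G)\setminus\{u,w\}$ with $G[S]$ a forest (as Lemma~\ref{lemma:cth_ind_set_cc}(e) requires), together with a witness vertex $p \in \hullc(S)$ that is lost whenever any $a\in S$ is removed.

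For $\mathscr{F}_1$ we take the cycle $C$ and the extra vertex $v$. We remove one vertex $w$ of $C$ and keep $v$. The only admissible removal is then $v$ together with a neighbour of $v$ on $C$, or alternatively two vertices of $C$ chosen so that the remaining induced graph is a forest. The witness is a removed cycle vertex, generated in a chain.

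For bicyclic graphs ($\mathscr{F}_2$, $\mathscr{F}_4$) we delete one vertex from each of two "independent" cycles, or one vertex of degree $3$ lying on the theta structure. We check directly that every vertex of $S$ lies on each generating path, so its removal breaks the chain.

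For $\mathscr{F}_3$ we remove $x$ and one endpoint, say $u$. The remaining vertex $y$ together with the path generates $u$ (through the cycle $y\,u\cdots v\,y$) and then $x$. Each element of $S$ is needed for this.

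\emph{Upper bound / characterization.} Assume $\car(G)=n-2$ and let $S=V(G)\setminus\{u,w\}$ be C-independent, with witness $p\in\{u,w\}$. By Lemma~\ref{lemma:cth_ind_set_cc}(e), $G[S]$ is a forest; by Theorem~\ref{caratheodorycycle}, $G\neq C_n$; and by Lemma~\ref{lemma:cth_ind_set_cc}(f), $\hullc(S)$ has no leaves.

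The key structural step adapts the argument of Theorem~\ref{caratheodorycycle}. If some cycle $C$ misses a vertex $a\in S$, and $a$ is not essential to generating $p$, then $p\in\hullc(S\setminus\{a\})$, so $S$ is C-dependent. We therefore aim to show that every vertex of $S$ lies on the unique generation chain for $p$. This forces $G$ to be "thin": since $G[S]$ is a forest, $|E(G)|\le (n-3)+\deg(u)+\deg(w)-[uw\in E]$, and we must bound $\deg(u),\deg(w)$. If $u$ had three neighbours in a single component of $G[S]$, there would be two different cycles through $u$, and one of them would avoid some vertex of $S$, making that vertex redundant.

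Carrying this out, we expect to reach $|E(G)|\le n+1$, that is, $G$ is unicyclic or bicyclic, with one exceptional dense configuration where $u,w$ are adjacent and both are joined to the two ends of a path, which gives $\mathscr{F}_3$. The unicyclic case with $G\neq C_n$ must leave exactly one vertex off the cycle, since otherwise we argue as in Theorem~\ref{caratheodorycycle}; this yields $\mathscr{F}_1$. The bicyclic cases split by $2$-connectivity into $\mathscr{F}_2$ and $\mathscr{F}_4$.

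\emph{Main obstacle.} The hardest step is the edge-count and degree bound. It requires a careful case analysis on whether $u,w$ are adjacent, whether both are generated or one is the witness, and in what order they are generated (in $\intv^1$ or $\intv^2$). We would first handle the case where $p$ is generated from $S\cup\{q\}$ with $q$ the other removed vertex, since $q\in\intv(S)$. Here $q$ has at least two neighbours in a common component of $G[S]$, and the redundancy argument shows it has exactly two, or that $G\in\mathscr{F}_3$.

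One caveat: the lower-bound constructions for $\mathscr{F}_1$ and $\mathscr{F}_4$ (for example a pendant-like attachment) must be checked against Lemma~\ref{lemma:cth_ind_set_cc}(b),(f). If some member fails, we would refine the family descriptions accordingly.
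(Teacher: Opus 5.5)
\textbf{Gap: the lower-bound half of your plan cannot be carried out, because the ``if'' direction is false as stated.} Your closing caveat is exactly where the argument breaks, and ``refining the family descriptions'' would mean proving a different theorem. The paper omits this direction as straightforward, so there is nothing there to fall back on.

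For $\mathscr{F}_1$, join $v$ to every vertex of $C_{n-1}$ with $n\ge 5$. In this wheel every edge lies in a triangle, and from any edge the remaining vertices are generated one at a time by triangles. So every edge is a hull set, and $\car(G)=2<n-2$ by Proposition~\ref{cthcycle}(b).

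For $\mathscr{F}_4$, take two triangles $abc$ and $def$ joined by the edge $cd$; here $n=6$, the graph is bicyclic, and $c$ is a cut vertex. A $4$-set $S$ inducing a forest omits exactly one vertex of each triangle, and the witness must be an omitted vertex. Each omitted vertex is generated by its two surviving triangle-mates alone, so it lies in $\hullc(S\setminus\{a\})$ for every $a\in S$ in the other triangle. Hence $\car(G)\le 3<4$. A diamond with a pendant vertex, with $\car=2<3$, is another example.

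This is precisely where your claim that ``every vertex of $S$ lies on each generating path'' fails: when the two cycles are generated independently, no deleted vertex can serve as a witness. What one can verify is that the following attain $n-2$:
\begin{itemize}
\item $C_{n-1}$ plus a pendant vertex;
\item theta graphs (delete a branch vertex and an interior vertex of one of the three paths);
\item two cycles sharing exactly one vertex (delete the shared vertex and one further vertex of one cycle);
\item the graphs in $\mathscr{F}_3$.
\end{itemize}
So $\mathscr{F}_1$ and $\mathscr{F}_4$ have to be cut down.

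\textbf{Your specific sets also fail on graphs where $n-2$ is attained.} In $\mathscr{F}_3$ with path $u=p_1,\dots,p_r=v$ and $r\ge 3$, deleting $x$ and $u$ leaves $y,v\in S$. Then $x\in\intv(\{y,v\})$ via the triangle $xyv$, and $u$ follows via the triangle $uxy$. So both possible witnesses lie in $\hullc(S\setminus\{p_2\})$, and $S$ is C-dependent. The right set is $S=V(P)$: each of $x,y$ is generated by a cycle through the whole path, and deleting any $p_i$ separates $u$ from $v$ in $G[S]$, so nothing is generated. Your recipe for $\mathscr{F}_1$ is self-contradictory: it keeps $v$ and then removes it.

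\textbf{The ``only if'' direction is a plan, not a proof.} The bound $|E(G)|\le n+1$ with one dense exception is announced, not derived. Your redundancy step, as stated, only shows that the deleted vertex $u$ lies in $\hullc(S\setminus\{a\})$. That says nothing about the witness when the witness is the other deleted vertex. The step becomes correct if, for a component $T$ of $G[S]$ with $|N_G(u)\cap V(T)|\ge 3$, you take $a$ to be a leaf of the smallest subtree of $T$ containing $N_G(u)\cap V(T)$. Then $u$, and afterwards $a$ itself, are regenerated, giving $\hullc(S\setminus\{a\})=\hullc(S)$.

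Even so, this only bounds the number of neighbours per component. The paper handles this direction by a case split on whether the two deleted vertices are adjacent and on how they attach to the tree components $T_i$ of $G[S]$. It uses Lemma~\ref{lemma:cth_ind_set_cc}(f) to force the $T_i$ to be paths and C-independence to bound the number of components. You would need that level of case analysis, and the structures it actually produces are much narrower than $\mathscr{F}_1$ and $\mathscr{F}_4$.
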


\begin{proof}
Let $G$ be a graph with $\car(G)=n-2$, and let $S$ be a maximum $C$-independent set such that $x,y \notin S$. Let $T_1, T_2, \ldots, T_k$ ($k \geq 1$) denote the components of the induced subgraph $G[S]$. Recall that each $T_i$ is a tree.

\medskip
\noindent {\bf Case 1.} $xy\in E(G)$.

Since $S$ is a non-convex set in $G$, either $x$ or $y$, say $x$, has at least two neighbours in some $T_k$, say $T_1$. Suppose that $y$ also has a neighbour in $T_1$. Then $x,y \in \hullc (T_1)$, and hence $k=1$. Moreover, $\deg(x)=3$ and $\deg(y)\leq 3$. 

If $\deg(y)=1$, then $G\in \mathscr{F}_1$. If $\deg(y)=2$, then clearly $G\in \mathscr{F}_2$. Now consider the case $\deg(y)=3$. Suppose that $y$ has a neighbour $w\in T_1$ such that $w\notin \{u,v\}$. Then $\hullc (S\setminus \{u\}) \cup \hullc (S\setminus \{v\}) = \hullc (S)$, which contradicts the fact that $S$ is a $C$-independent set in $G$. Thus $u$ and $v$ are the only neighbours of $y$ in $S$, and hence $G\in \mathscr{F}_3$.

In the following, assume that $y$ has no neighbours in $T_1$. If $y$ has more than one neighbour in any $T_i$ ($2\leq i\leq k$), then $S$ becomes $C$-dependent. Since $\deg(y)\geq 2$, without loss of generality assume that $y$ has a unique neighbour in $T_2$, say $u'$. This implies that $x$ must have at least one neighbour in $T_2$, say $v'$. Again, since $S$ is $C$-independent, it follows that $k=2$, $\deg(x)=4$, and $\deg(y)=2$. As $G$ has no pendant vertices, $T_1$ is a path joining $u$ and $v$, and $T_2$ is a path joining $u'$ and $v'$. Therefore, $G\in \mathscr{F}_4$.

\medskip
\noindent \textbf{Case 2.} $xy \notin E(G)$.
    
Since $S$ is not convex in $G$, it follows that either $x$ or $y$ (say $x$) has at least two neighbors in some component $T_i$, say $T_1$. First, suppose that $k = 1$. Since $G[T_1 \cup \{x,y\}]$ contains no pendant edges, it follows that $T_1$ is a path. As $S$ is $C$-independent, either $x$ or $y$ has exactly two neighbors in $T_1$. Without loss of generality, assume that $x$ has exactly two neighbors in $T_1$. Then the induced subgraph $G[T_1 \cup \{x\}]$ forms a cycle. Consequently, $G \in \mathscr{F}_1$. Hence, we may assume that $k \geq 2$. As above, it follows that $G[T_1 \cup \{x\}]$ forms a cycle.

Now, suppose that $y$ has two neighbors in $T_1$. Then $x,y \in \hullc (T_1 )$. Since $k \geq 2$, this implies that $S$ is $C$-dependent, a contradiction. Hence, we may assume that $y$ has at most one neighbor in $T_1$. Using a similar argument, we deduce that $y$ has at most one neighbor in each component $T_i$, for all $2 \leq i \leq k$.
First, consider the case in which $y$ has a neighbor in $T_1$. Since $y \in \hullc (S)$, we may assume, without loss of generality, that $y$ has exactly one neighbor in another component $T_2$. Again, as $y \in \langle S \rangle$ and $S$ is $C$-independent, it follows that $k = 2$ and that $x$ has exactly one neighbor in $T_2$. Since $G[S \cup \{x,y\}]$ contains no pendant edges, it follows that $G \in \mathscr{F}_2$. Thus, in the follows, we assume that $y$ has no neighbors in $T_1$. As above,
since $y \in \hullc (S)$, the vertex $y$ must have at least one neighbor in at least two components of $G[S]$. Hence, without loss of generality, we may assume that $y$ has exactly one neighbor in each of $T_2$ and $T_3$. Again, since $y \in \hullc (S)$ and $S$ is $C$-independent, it follows that $x$ also has exactly one neighbor in each of $T_2$ and $T_3$. Consequently, we must have $k = 3$, and both $T_2$ and $T_3$ are paths. Since $G[S \cup \{x,y\}]$ contains no pendant edges, it follows that $G \in \mathscr{F}_4$.

Conversely, it is straightforward to verify that $\car(G) = n-2$ for all graphs $G$ belonging to the family $\displaystyle\bigcup_{i=1}^4 \mathscr{F}_i$. We therefore omit the proof. 
\end{proof}

\section{Carath\'eodory Number of Product Graphs}
\label{sec:products}

Let $G$ and $H$ be two graphs.
In this section we study the Carath\'eodory number of the \emph{Cartesian product} $G \Box H$, \emph{lexicographic product} $G \circ H$, and \emph{strong product} $G \boxtimes H$. The graphs $G$ and $H$ are called \textit{factors}. All these products share the vertex set $V(G) \times V(H)$. For $(g_1, h_1), (g_2, h_2) \in V(G) \times V(H)$:
In the Cartesian product $G \square H$, the vertices $(g_1, h_1)$ and $(g_2, h_2)$ are adjacent if and only if either 
i) $g_1 \sim g_2$ in $G$ and $h_1 = h_2$, or 
ii) $g_1 = g_2$ and $h_1 \sim h_2$ in $H$.
In the lexicographic product $G \circ H$, these vertices are adjacent if either 
i) $g_1 \sim g_2$, or 
ii) $g_1 = g_2$ and $h_1 \sim h_2$.
Finally, in the strong product $G \boxtimes H$, the vertices $(g_1, h_1)$ and $(g_2, h_2)$ are adjacent if one of the following holds: 
i) $g_1 \sim g_2$ and $h_1 = h_2$, 
ii) $g_1 = g_2$ and $h_1 \sim h_2$, or 
iii) $g_1 \sim g_2$ and $h_1 \sim h_2$.

If $\ast \in \{\square, \circ, \boxtimes\}$, then the \emph{projection mappings} $\pi_G: V(G \ast H) \rightarrow V(G)$ and $\pi_H: V(G \ast H) \rightarrow V(H)$ are given by $\pi_G(u,v) = u$ and $\pi_H(u,v) = v$, respectively. We adopt the following conventions. For $u \in V(G)$ and $v \in V(H)$, we define $^uH$ to be the subgraph of $G \ast H$ induced by $\{ u \} \times V(H)$, which we call an \emph{$H$-layer}, while the \emph{$G$-layer}, $G^v$ is the subgraph induced by $V(G) \times \{ v \}$. 
If $S \subseteq V(G \square H)$, then the set $\{g \in V(G) :\ (g,h) \in S \text{ for some } h \in V(H)\}$ is the \emph{projection} $\pi_G(S)$ of $S$ on $G$. The projection $\pi_H(S)$ of $S$ on $H$ is defined analogously. Finally, for $S \subseteq V(G \ast H)$, we call $S$ a \emph{sub-product} of $G \ast H$ if $S = \pi_G(S) \times \pi_H(S)$.

As observed in \cite[Remark~4.8]{anand2025caratheodory}, any two adjacent vertices of $G \ast H$, where $\ast \in \{\boxtimes,\circ\}$, form a hull set of $G \ast H$. Hence, every three-vertex subset of $G \ast H$ is $C$-dependent for $\ast \in \{\boxtimes,\circ\}$. Consequently, $\car(G \ast H)=2 \quad \text{for } \ast \in \{\boxtimes,\circ\}.$
Therefore, throughout the remainder of this section, we restrict our attention to the Carath\'eodory number of Cartesian product graphs. The following result on convex sets in Cartesian product graphs is required.
\begin{theorem}{\rm \label{theo:StimesT}\cite{anand2025complexity}}
Let $G$ and $H$ be two nontrivial connected graphs and let $S$ and $T$ be any two convex sets in $G$ and $H$ respectively. Then $S\times T$ is convex in $G\Box H$. 
\end{theorem}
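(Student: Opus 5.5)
The plan is to argue directly from the definition of cycle convexity, using the projection map $\pi_G$ (and, symmetrically, $\pi_H$). Suppose, for a contradiction, that $S\times T$ is not convex in $G\Box H$. Then there is a vertex $(g,h)\notin S\times T$ and a cycle $C$ in $G\Box H$ that contains $(g,h)$ and has all its other vertices in $S\times T$. Since $(g,h)\notin S\times T$, either $g\notin S$ or $h\notin T$. By the symmetry between the factors, I will assume $g\notin S$ and derive a contradiction with the convexity of $S$ in $G$.

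First I locate the two neighbours of $(g,h)$ on $C$. Every neighbour of $(g,h)$ in $G\Box H$ has the form $(g,h')$ with $hh'\in E(H)$, or $(g',h)$ with $gg'\in E(G)$. A vertex $(g,h')$ has first coordinate $g\notin S$, so it is not in $S\times T$. Hence both neighbours of $(g,h)$ on $C$ lie in the $G$-layer $G^h$; call them $(g_1,h)$ and $(g_2,h)$. Then $g_1,g_2\in S$, both are neighbours of $g$ in $G$, and $g_1\neq g_2$ because the two neighbours on $C$ are distinct vertices. (If also $h\notin T$, no neighbour of $(g,h)$ lies in $S\times T$ at all, which is already a contradiction, so this case needs no separate treatment.)

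Next I project the rest of the cycle. Removing $(g,h)$ from $C$ leaves a path $P$ from $(g_1,h)$ to $(g_2,h)$, all of whose vertices lie in $S\times T$. In $G\Box H$, two consecutive vertices of $P$ have first coordinates that are either equal or adjacent in $G$. So $\pi_G$ maps $P$ to a walk from $g_1$ to $g_2$ in $G$ that stays inside $S$, since every vertex of $P$ has its first coordinate in $S$. From this walk I extract a $(g_1,g_2)$-path $Q$ in $G[S]$, after deleting repeated consecutive vertices and shortcutting any repetitions. This path avoids $g$ because $g\notin S$.

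Finally, adding the edges $gg_1$ and $gg_2$ to $Q$ closes a cycle in $G$. It is a genuine cycle because $g_1\neq g_2$ and $g\notin V(Q)$, so it has at least three vertices. This cycle contains $g$, and all of its other vertices lie in $S$, so $g\in\intv(S)=S$, contradicting $g\notin S$. The case $h\notin T$ is symmetric, using $\pi_H$ and the $H$-layer ${}^gH$. The only delicate point is the first step: the observation that both cycle-neighbours of $(g,h)$ are forced into the same layer is what guarantees that the projected path closes into a real cycle through $g$ rather than collapsing. I do not expect the nontriviality or connectivity hypotheses to be needed.
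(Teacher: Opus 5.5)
Your proof is correct. The paper does not prove this statement; it imports it from \cite{anand2025complexity} and uses it only to conclude that $V(P_r)\times V(P_s)$ is convex in the proof of Theorem~\ref{thm:lbound}, so there is no in-paper argument to compare against.

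Your argument is a complete, elementary replacement for the citation. The key observation is sound: a vertex $(g,h)$ with $g\notin S$ can have cycle-neighbours in $S\times T$ only inside its own $G$-layer, so those neighbours are $(g_1,h)$ and $(g_2,h)$ with $g_1\neq g_2$. Projecting the rest of the cycle by $\pi_G$ then gives a $g_1$--$g_2$ path in $G[S]$ that avoids $g$. This closes into a genuine cycle through $g$, contradicting the convexity of $S$.

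You are also right that nontriviality and connectedness are never used, so the statement holds for arbitrary factors, including empty $S$ or $T$.

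A slightly tidier packaging of the same idea is available. In the case $g\notin S$, your argument only uses that the other cycle vertices lie in $S\times V(H)$, so it actually shows that the cylinder $S\times V(H)$ is convex. Symmetrically, $V(G)\times T$ is convex. Then $S\times T$ is their intersection and is convex because cycle-convex sets are closed under intersection. This also makes your separate remark about the case where both $g\notin S$ and $h\notin T$ unnecessary.
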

In the following, we first establish a tight lower bound for the Carath\'eodory number of Cartesian product graphs, for which some preparation is needed.
\begin{lemma}\label{path_cycle}
    Let $G$ and $H$ be nontrivial connected graphs and let $S \subseteq V(G \square H)$. 
If $P$ is a path in the subgraph induced by $S$, then $\pi_G(P)\times \pi_H(P)\subseteq \langle S\rangle$.
\end{lemma}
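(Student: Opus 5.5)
Write $P = x_0x_1\cdots x_r$ with $x_i=(g_i,h_i)$. Since consecutive vertices of $P$ are adjacent in $G\square H$, each step changes exactly one coordinate. I will proceed by induction on $r$, the length of $P$. For $r=0$ the claim $\{(g_0,h_0)\}\subseteq\hullc(S)$ is trivial.

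For the inductive step, let $P'=x_0\cdots x_{r-1}$. By induction the sub-product $R'=\pi_G(P')\times\pi_H(P')$ lies in $\hullc(S)$. Let $A=\pi_G(P')$ and $B=\pi_H(P')$. Note that $G[A]$ and $H[B]$ are connected, because the projection of a path of $G\square H$ is a walk in the factor.

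The last edge $x_{r-1}x_r$ changes one coordinate; by symmetry say $h_r=h_{r-1}$ and $g_r\sim g_{r-1}$. If $g_r\in A$, then $\pi_G(P)\times\pi_H(P)=R'$ and there is nothing to prove. Otherwise we must show $\{g_r\}\times B\subseteq\hullc(S)$. We already have $(g_r,h_{r-1})=x_r\in S\subseteq\hullc(S)$. The key step is to spread along the $H$-layer ${}^{g_r}H$ through $B$.

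Take $b\in B$ adjacent in $H[B]$ to some $b'$ with $(g_r,b')\in\hullc(S)$. Then
\[
(g_r,b),\ (g_{r-1},b),\ (g_{r-1},b'),\ (g_r,b')
\]
is a $4$-cycle in $G\square H$. Its vertices $(g_{r-1},b)$ and $(g_{r-1},b')$ lie in $R'\subseteq\hullc(S)$, and $(g_r,b')$ lies in $\hullc(S)$ by choice of $b'$. Hence $(g_r,b)\in\intv(\hullc(S))=\hullc(S)$.

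Since $H[B]$ is connected and contains $h_{r-1}$, a BFS from $h_{r-1}$ inside $H[B]$ reaches every vertex of $B$, giving $\{g_r\}\times B\subseteq\hullc(S)$. The case where the last edge changes the $H$-coordinate is symmetric. This completes the induction.

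The only delicate point is confirming that $H[B]$ is connected, so that the propagation reaches all of $B$. This holds because consecutive vertices of $P'$ have $H$-coordinates that are equal or adjacent, so $\pi_H(P')$ is traced by a walk in $H$. The hypothesis that the factors are nontrivial and connected is not otherwise needed. The statement's $\langle S\rangle$ denotes $\hullc(S)$.
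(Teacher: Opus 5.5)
Your proof is correct, but it organizes the induction differently from the paper.

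The paper applies the induction hypothesis twice: to the prefix $P'$ (drop the last vertex) and to the suffix $P''$ (drop the first vertex). The union of the two sub-products then misses at most the two corners $(g_1,h_k)$ and $(g_k,h_1)$. Each corner is recovered by a single $4$-cycle whose other three vertices are already in $\hullc(S)$.

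You use only the prefix. You then fill the whole new layer $\{g_r\}\times B$ by propagating $4$-cycles outward from $h_{r-1}$ through $H[B]$. This needs the extra fact that $H[\pi_H(P')]$ is connected, and your justification (consecutive $H$-coordinates are equal or adjacent) is right.

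The paper's version keeps the inductive step local and avoids any connectivity argument. It pays for this with a case analysis on the directions of both end edges, which it does not fully carry out. Its ``without loss of generality'' ($g_1g_2\in E(G)$, $h_{k-1}h_k\in E(H)$) is not a true symmetry. The cases where both end edges change the same coordinate are skipped; they happen to be trivial, since nothing is then missing. Also, the $4$-cycle it writes for $(g_k,h_1)$ uses adjacencies incompatible with that assumption. This is harmless, because under the assumption $(g_k,h_1)=(g_{k-1},h_1)$ already lies in the prefix sub-product.

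Your case split on the last edge alone is a genuine symmetry, so your argument is complete as written. It also never uses that $P$ is a path rather than a walk.
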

\begin{proof}

 Consider a path 
$P : (x,y) = (g_1,h_1), (g_2,h_2), \ldots, (g_k,h_k) = (x',y')$
of length $k$ between $(x,y)$ and $(x',y')$ in the induced subgraph on $S$. We proceed by induction on $k$. If $k=1$ or $k=2$, the result holds trivially. 
Consider the case $k=3$. 
If $g_1=g_2=g_3$ or $h_1=h_2=h_3$, then $\pi_G(P)\times \pi_H(P)\subseteq \hullc (S)$.
Hence, without loss of generality, assume that $g_1=g_2$ and $g_2g_3\in E(G)$. 
Then $h_1h_2\in E(H)$ and $h_2=h_3$. 
Since $(g_1,h_1)$, $(g_2,h_2)$ and $(g_3,h_3)$ belong to $S$, and the vertices $(g_1,h_1),\ (g_2,h_2),\ (g_3,h_3),\ (g_3,h_1)$
form a cycle in $G \square H$, it follows that $(g_3,h_1)\in \hullc (S)$. Therefore,
$\pi_G(P)\times \pi_H(P)\subseteq \hullc (S)$.

Now assume that the result holds for all paths of length $i<k$. 
We prove it for a path $P : (x,y) = (g_1,h_1), (g_2,h_2), \ldots, (g_k,h_k) = (x',y')$ 
of length $k$ in the subgraph induced by $S$. Without loss of generality, we may assume that 
$g_1g_2 \in E(G)$ and $h_{k-1}h_k \in E(H)$. Consider the subpaths $P' : (x,y)=(g_1,h_1), (g_2,h_2), \ldots, (g_{k-1},h_{k-1})$ and $
P'' : (g_2,h_2), (g_3,h_3), \ldots, (g_k,h_k)=(x',y')$,
each of which is a path of length $k-1$ in the subgraph induced by $S$. 
By the induction hypothesis,
$\pi_G(P')\times \pi_H(P') \subseteq \hullc (S)$
and
$\pi_G(P'')\times \pi_H(P'') \subseteq \hullc (S)$.
Observe that
$(\pi_G(P)\times \pi_H(P)) 
\setminus 
((\pi_G(P')\times \pi_H(P')) 
\cup 
(\pi_G(P'')\times \pi_H(P''))) 
= \{(g_1,h_k), (g_k,h_1)\}$.
Now, 
$(g_1,h_{k-1}), (g_2,h_{k-1}), (g_2,h_k)
\in
(\pi_G(P')\times \pi_H(P'))
\cup
(\pi_G(P'')\times \pi_H(P''))
\subseteq
\hullc (S)$.\\
Since
$(g_1,h_{k-1}), (g_2,h_{k-1}), (g_2,h_k), (g_1,h_k)$
form a cycle in $G\Box H$, it follows that $(g_1,h_k)\in \hullc (S)$.

Similarly,
$(g_{k-1},h_1), (g_{k-1},h_2), (g_k,h_2)
\in
(\pi_G(P')\times \pi_H(P'))
\cup
(\pi_G(P'')\times \pi_H(P''))
\subseteq
\hullc (S)$, and since
$(g_{k-1},h_1), (g_{k-1},h_2), (g_k,h_2), (g_k,h_1)$
form a cycle in $G\Box H$, we obtain $(g_k,h_1)\in \hullc (S)$.

Therefore,
$\pi_G(P)\times \pi_H(P)\subseteq \hullc (S)$.
Hence, by mathematical induction, the result follows.
\end{proof}
\begin{lemma}\label{lem:subproduct} Let $G$ and $H$ be nontrivial connected graphs, and let $S$ be a connected convex set in $G \Box H$. Then the following statements hold:
\begin{enumerate}
    \item[$(1)$]  $S = \pi_G(S) \times \pi_H(S)$.
    \item[$(2)$] The projections $\pi_G(S)$ and $\pi_H(S)$ are convex sets in $G$ and $H$, respectively.
\end{enumerate}
\end{lemma}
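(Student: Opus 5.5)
For part (1), only the inclusion $\pi_G(S)\times\pi_H(S)\subseteq S$ needs proof, since $S\subseteq \pi_G(S)\times\pi_H(S)$ is trivial. Fix $(g,h')$ with $g\in\pi_G(S)$ and $h'\in\pi_H(S)$. Choose $(g,h)\in S$ and $(g',h')\in S$. Because $S$ is connected, there is a path $P$ in $G\Box H[S]$ from $(g,h)$ to $(g',h')$. Lemma~\ref{path_cycle} gives $\pi_G(P)\times\pi_H(P)\subseteq \hullc(S)$. Since $S$ is convex, $\hullc(S)=S$. We have $g\in\pi_G(P)$ and $h'\in\pi_H(P)$, because these coordinates occur at the endpoints of $P$. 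Hence $(g,h')\in S$, which proves (1).

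For part (2), I write $A=\pi_G(S)$ and $B=\pi_H(S)$, so that $S=A\times B$ by (1). Suppose, for contradiction, that $A$ is not convex in $G$. Then some $w\in V(G)\setminus A$ lies on a cycle $w a_1a_2\cdots a_r w$ of $G$ with all $a_i\in A$ and $r\ge 2$. I pick any $b\in B$, which is possible because $S$ is nonempty if it is connected. The $G$-layer copy $(w,b)(a_1,b)\cdots(a_r,b)(w,b)$ is a cycle in $G\Box H$. All of its vertices other than $(w,b)$ lie in $A\times\{b\}\subseteq S$. Since $(w,b)\notin S$, this contradicts $\intv(S)=S$. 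The argument for $B$ is symmetric, using $H$-layers.

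I expect the main point needing care to be the applicability of Lemma~\ref{path_cycle}. That lemma requires $P$ to be a path in the subgraph induced by $S$, and connectivity of $S$, meaning $G[S]$ is connected, supplies exactly this. I also need to check that the degenerate cases are harmless, namely when $P$ has a single vertex or when $S$ is a singleton. In those cases both statements are immediate, since a single vertex is trivially convex in the factor. I also read ``convex'' in (2) as cycle convexity in the factor graphs, consistent with Theorem~\ref{theo:StimesT}. Finally, cycles in a single layer are genuine cycles of $G\Box H$ because each layer is an isomorphic copy of the factor.
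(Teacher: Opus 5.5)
Your proof is correct, and part (2) follows the paper's argument: a cycle witnessing that $\pi_G(S)$ is not convex is copied into a layer $G^b$ with $b\in\pi_H(S)$, and (1) places every vertex of that copy except $(w,b)$ in $S$. Your version is slightly sharper than the paper's. You use a single application of $\intv$ and say explicitly that the contradiction is with the convexity of $S$, whereas the paper goes through $\hullc(\pi_G(S)\times\{h\})$ and says only that this ``contradicts (1)''.

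Where you genuinely differ is part (1). You obtain it as a short corollary of Lemma~\ref{path_cycle}: one path $P$ in $G\Box H[S]$ from $(g,h)$ to $(g',h')$ gives $\pi_G(P)\times\pi_H(P)\subseteq\hullc(S)=S$, and this product contains $(g,h')$. The paper does not invoke Lemma~\ref{path_cycle} at this point. Instead it inducts on the length of such a path, first obtaining $\{x_1,\dots,x_{k-1}\}\times\{y_1,\dots,y_{k-1}\}\subseteq S$. It then pushes the new coordinate $x_k$ along the walk $\pi_H(P)$ using the $4$-cycles $\{x_{k-1},x_k\}\times\{v_i,v_{i+1}\}$. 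In effect, it re-proves the content of Lemma~\ref{path_cycle} with $\hullc(S)$ replaced by the convex set $S$.

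Your route is shorter and shows that (1) needs nothing beyond the preceding lemma plus $\hullc(S)=S$. The paper's route is self-contained and does not rely on Lemma~\ref{path_cycle}, whose written proof is loose in its ``without loss of generality'' about the factors containing the first and last edges of the path. The statement of that lemma is nevertheless true: add path vertices one at a time and sweep each new coordinate across the connected set of already-visited coordinates of the other factor via $4$-cycles. So the dependency you introduce is sound.
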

\begin{proof} (1)
Let $g, g' \in \pi_G(S)$. Then there exist $h, h' \in V(H)$ such that $(g,h), (g',h') \in S$. 
To complete the proof, it suffices to show that $\{g,g'\} \times \{h,h'\} \subseteq S$. 
For this purpose, consider a path $P \colon (g,h) = (x_1,y_1), (x_2,y_2), \ldots, (x_k,y_k) = (g',h')$
in the induced subgraph of $S$.
We proceed by induction on $k$. 
For the base case $k = 1$, without loss of generality, the path $P$ is given by $(g,h), (g',h), (g',h')$. 
Since $S$ is convex, it follows that $(g,h') \in S$. 
Hence, the result holds for $k = 1$. Assume that the result holds for all paths of length \( i < k \). That is, if \( (x,y) \in S \) and there exists a path between \( (g,h) \) and \( (x,y) \) in the induced subgraph of \( S \) in \( G \Box H \) of length at most \( k \), then \( \{g,x\} \times \{h,y\} \subseteq S \).
Given this assumption to the  path $P$ implies that \( \{x_1, x_2, \ldots, x_{k-1}\} \times \{y_1, y_2, \ldots, y_{k-1}\} \subseteq S \). Now, since $(x_{k-1}, y_{k-1})$ and $(x_k, y_k)$ are adjacent in $G\Box H$, hence without loss of generality, we may assume that $y_k = y_{k-1}$. This in turn implies that $(x_k, y_{k-1}) \in S$. From the path $P$ in $G\Box H$, it is clear that \( \pi_H(P) : y_1, y_2, \ldots, y_{k-1} \) forms an $(h,h')$-walk in \( H \). Now, choose any \( y_i \) where \( 1 \leq i \leq k-2 \). Let \( Q \) be a \( (y_{k-1},y_i) \)-sub walk of \( \pi_H(P) \), say \( Q : y_{k-1}, v_1, v_2, \ldots, v_r = y_i \), where \( \{v_1, v_2, \ldots, v_r\} \subseteq \{y_1, y_2, \ldots, y_{k-1}\} \).
By the induction hypothesis, \( \{x_{k-1}\} \times \{v_1, v_2, \ldots, v_r\} \subseteq S \). This sequentially shows that \( (x_k, v_2), (x_k, v_3), \ldots, (x_k, v_r) \in S \). Therefore, \( \{x_k\} \times \{y_1, y_2, \ldots, y_{k-1}\} \subseteq S \).
This implies that $\{g,g'\} \times \{h,h'\} \subseteq S$.\\
To prove (2), suppose, to the contrary, that $\pi_G(S)$ is not convex. Then there exists a vertex $g \notin \pi_G(S)$ such that $g \in \hullc (\pi_G(S))$. Consequently, for any $h \in \pi_H(S)$, we have $(g,h) \in \hullc (\pi_G(S) \times \{h\} )$. This contradicts the statement~(1).
\end{proof}
Using the above two lemmas, we obtain the following lower bound.
\begin{theorem}\label{thm:lbound}
     Let $G$ and $H$ be two nontrivial connected graphs of orders $m$ and $n$, respectively. 
Let $P_r$ and $P_s$ be longest paths in $G$ and $H$, respectively, such that 
$\hullc (V(P_r)) = P_r$ and $\hullc (V(P_s)) = P_s$. 
Then $\car(G \Box H) \geq \max \{ r + s - 1,\; \car(G) + \car(H) - 1 \}$.
\end{theorem}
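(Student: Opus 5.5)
We prove the two lower bounds separately, in each case by writing down an explicit $C$-independent set in $G\Box H$ of the required size.

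\emph{The bound $r+s-1$.} Let $P_r: g_1g_2\cdots g_r$ in $G$ and $P_s: h_1h_2\cdots h_s$ in $H$ be the given paths; their vertex sets are convex. Take the ``L-shaped'' set
$$S=\{(g_i,h_1): 1\le i\le r\}\cup\{(g_r,h_j): 2\le j\le s\}.$$
This set has $r+s-1$ elements and induces a path $P$ in $G\Box H$.

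By Lemma~\ref{path_cycle}, $V(P_r)\times V(P_s)=\pi_G(P)\times\pi_H(P)\subseteq\hullc(S)$. By Theorem~\ref{theo:StimesT}, $V(P_r)\times V(P_s)$ is convex, so in fact $\hullc(S)=V(P_r)\times V(P_s)$.

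For the witness, take the corner $p=(g_1,h_s)$. Remove an element $a\in S$. Then $S\setminus\{a\}$ splits into at most two subpaths, each lying in a sub-product of the form $V(P')\times V(P'')$, where $P'$ and $P''$ are subpaths of $P_r$ and $P_s$.
\begin{itemize}
\item If $a$ is an endpoint of the L, the remaining set is one subpath. It lies in $V(P_r)\times (V(P_s)\setminus\{h_s\})$ or in $(V(P_r)\setminus\{g_1\})\times V(P_s)$.
\item If $a$ is interior, the two pieces lie in $\{g_1,\dots,g_{i-1}\}\times\{h_1\}$ and in $\{g_{i+1},\dots,g_r\}\times V(P_s)$ (or the analogous pair when $a$ is on the vertical leg).
\end{itemize}
In every case, each piece lies in a product of two convex sets (subpaths of an induced path are convex, since the path itself is convex). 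Hence each piece lies in a convex set avoiding $p$.

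\emph{Main obstacle.} The union of two convex pieces need not be convex. We must show that no vertex outside the union forms a cycle through both pieces. This follows because the two pieces are far apart: one lies in the layer $h_1$ restricted to $g_1,\dots,g_{i-1}$, and the other lies in columns $g_{i+1},\dots,g_r$. A vertex $(g,h)$ with neighbours in both pieces would need $g$ adjacent to or equal to some $g_{<i}$ and to some $g_{>i}$, with $h\in\{h_1\}\cup N(h_1)$. Such cycles would re-enter the convex set $V(P_r)\times V(P_s)$ through $g_i$ or otherwise. I would handle this with a careful check, using that $\hullc(S\setminus\{a\})\subseteq V(P_r)\times V(P_s)$ and that $V(P_r)$ is convex in $G$ (so $g_i$ is not bypassed by a cycle avoiding it). The simpler alternative is to choose $p$ so that it is obviously missed.

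\emph{The bound $\car(G)+\car(H)-1$.} Take maximum $C$-independent sets $A\subseteq V(G)$ and $B\subseteq V(H)$ with witnesses $p_G$ and $p_H$. Fix $b_0\in B$ and $a_0\in A$, and set
$$S=(A\times\{b_0\})\cup(\{a_0\}\times B),$$
which has size $|A|+|B|-1$.

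Layer copies show that $\hullc(S)\supseteq \hullc(A)\times\{b_0\}\cup\{a_0\}\times\hullc(B)$. These two pieces are connected and intersect at $(a_0,b_0)$, so Lemma~\ref{path_cycle} applied to paths through $(a_0,b_0)$ gives $(p_G,p_H)\in\hullc(S)$.

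Now remove an element. Say we remove $(a,b_0)$ with $a\neq a_0$. Projecting to $G$, the set $\pi_G(\hullc(S\setminus\{a\}))$ should stay inside $\hullc(A\setminus\{a\})\cup$ (the convex hull contributions from $a_0$). Then $p_G$ is not reached, so the witness $(p_G,p_H)$ is missed. The symmetric argument handles removals from the $H$-copy. The removal of $(a_0,b_0)$ disconnects the two copies, and convexity of $\hullc(A\setminus\{a_0\})\times\{b_0\}$ and $\{a_0\}\times\hullc(B\setminus\{b_0\})$ (via Theorem~\ref{theo:StimesT}) shows $(p_G,p_H)$ is missed. The key tool throughout is Theorem~\ref{theo:StimesT}: each piece is contained in a product of convex sets that avoids the witness.
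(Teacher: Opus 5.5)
\paragraph{Verdict: same construction as the paper, with the key step missing.} Your two constructions and witnesses coincide with the paper's. However, the step you yourself call the main obstacle is left unproved, so there is a genuine gap.

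\paragraph{First bound: convexity of a union of two pieces.} Whenever $S\setminus\{a\}$ splits into two pieces, knowing that each piece lies in a convex set avoiding $p$ is not enough. You need the union of the two convex sets to be convex. This happens for an interior $a$ in the first part, and for $a=(a_0,b_0)$ in the second. Your intuition that the pieces are ``far apart'' is right, but looking for vertices with neighbours in both pieces does not settle it. For $1<i<r$ the vertex $(g_i,h_1)$ has one neighbour in each piece and is not generated. Changing $p$ does not remove the need to bound $\hullc(S\setminus\{a\})$ either.

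The missing observation is short. If $X_1,X_2$ are disjoint convex sets with no edge between them, then $X_1\cup X_2$ is convex. Indeed, for $u\notin X_1\cup X_2$, any cycle $C$ through $u$ in the subgraph induced by $X_1\cup X_2\cup\{u\}$ yields a path $C-u$ inside $X_1\cup X_2$. That path must lie in a single $X_i$, contradicting the convexity of $X_i$.

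In the first part, take the pieces $\{g_1,\dots,g_{i-1}\}\times\{h_1\}$ and $\{g_{i+1},\dots,g_r\}\times V(P_s)$ (and similarly elsewhere). They are non-adjacent because adjacent vertices of $G\Box H$ agree in one coordinate and are adjacent in the other, and because $P_r,P_s$ are \emph{induced}. You use inducedness tacitly, including for the convexity of subpaths, and should state it; a chord would already put a cycle inside the L-set.

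The corner $a=(g_r,h_1)$ is not covered by your formulas. Its pieces are $\{g_1,\dots,g_{r-1}\}\times\{h_1\}$ and $\{g_r\}\times\{h_2,\dots,h_s\}$. The paper merely asserts the resulting hulls, so this union lemma is what actually completes both arguments.

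The L-construction also needs $r,s\ge 2$. If every edge of $G$ lies in a triangle (e.g.\ $G=K_3$), then $r=1$. The L-set is then the convex set $\{g_1\}\times V(P_s)$, which contains $p$. Neither you nor the paper handles this case.

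\paragraph{Second bound.} Removing $(a,b_0)$ with $a\neq a_0$ needs no projection heuristic. We have $S\setminus\{(a,b_0)\}\subseteq\hullc(A\setminus\{a\})\times\hullc(B)$, which is convex by Theorem~\ref{theo:StimesT}. It misses $(p_G,p_H)$ because $p_G\notin\hullc(A\setminus\{a\})$. The case $(a_0,b)$ with $b\neq b_0$ is symmetric. This single product is simpler than the paper's exact formula with the component $C$ containing $g_r$.

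For $a=(a_0,b_0)$, what matters is not that $S\setminus\{(a_0,b_0)\}$ is disconnected. You need the convex sets $\hullc(A\setminus\{a_0\})\times\{b_0\}$ and $\{a_0\}\times\hullc(B\setminus\{b_0\})$ to be disjoint and non-adjacent. This holds because $a_0\notin\hullc(A\setminus\{a_0\})$ and $b_0\notin\hullc(B\setminus\{b_0\})$; otherwise, say, $\hullc(A\setminus\{a_0\})=\hullc(A)\ni p_G$. After that the union lemma applies.

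Your two-piece description here is in fact the correct one. In the paper's notation, its bound $\hullc(S\setminus\{g_r\})\times\hullc(T\setminus\{h_1\})$ does not even contain $U\setminus\{(g_r,h_1)\}$, since $h_1\notin\hullc(T\setminus\{h_1\})$.

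Finally, you derive $(p_G,p_H)\in\hullc(S)$ from Lemma~\ref{path_cycle} along a path from $(p_G,b_0)$ through $(a_0,b_0)$ to $(a_0,p_H)$. This uses the connectivity of $\hullc(A)$ and $\hullc(B)$ from Lemma~\ref{lemma:cth_ind_set_cc}(c), and is a valid substitute for the paper's appeal to Lemma~\ref{lem:subproduct}.
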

\begin{proof}
Let $P_r$ and $P_s$ be two longest paths in $G$ and $H$, respectively, such that 
$\hullc (V(P_r)) = V(P_r)$ and $\hullc (V(P_s)) = V(P_s)$. 
We fix 
$P_r : x_1,x_2,\ldots , x_r$ 
and 
$P_s : y_1,y_2,\ldots,y_s$.

\medskip
\noindent
{\bf Claim:} 
$S=\{(x_1,y_1), (x_2,y_1), \ldots, (x_r,y_1), (x_r,y_2),(x_r,y_3),\ldots, (x_r,y_s)\}$ 
is a $C$-independent set in $G\Box H$.

\medskip

It is clear that 
$(x_1,y_1), (x_2,y_1), \ldots, (x_r,y_1), (x_r,y_2),(x_r,y_3),\ldots, (x_r,y_s)$ 
forms a path in $G\Box H$. Then, by Lemma~\ref{path_cycle}, we obtain
$V(P_r)\times V(P_s)\subseteq \hullc (S)$.
On the other hand, recall that $V(P_r)$ and $V(P_s)$ are convex sets in $G$ and $H$, respectively. Consequently, by Theorem~\ref{theo:StimesT}, we get $V(P_r)\times V(P_s)$ is a convex set in $G\Box H$. Therefore, it follows that $\hullc (S) = V(P_r)\times V(P_s)$. Now consider the vertex $(x_1,y_s)\in \hullc (S)$. 
We claim that for any $(x_i,y_j)\in S\setminus \{(x_1,y_s)\}$, 
$(x_1,y_s)\notin \hullc (S\setminus \{(x_i,y_j)\})$. 
Fix $(x_i,y_j)\neq (x_1,y_s)$. First consider the case when $i=r$. 
If $j>1$, then $\hullc (S\setminus \{(x_i,y_j)\}) 
= V(P_r)\times \{y_1,y_2,\ldots, y_{j-1}\}
\;\cup\;
\{(x_r,y_{j+1}),(x_r,y_{j+2}),\ldots,(x_r,y_s)\}$.
On the other hand, if $j=1$, then
$\hullc (S\setminus \{(x_i,y_j)\})
=
\{(x_1,y_1),(x_2,y_1),\ldots, (x_{r-1},y_1)\}
\cup
\{(x_r,y_2),(x_r,y_{3}),\ldots,(x_r,y_s)\}$. Next, consider the case when $i< r$. Then $\hullc (S\setminus \{(x_i,y_j)\})
= \{(x_1,y_1), (x_2,y_1), \ldots, (x_{i-1},y_1)\}
\;\cup\;
(\{x_{i+1}, x_{i+2}, \ldots, x_r\}\times V(P_s))$.
Hence, $(x_1,y_s)\notin \hullc (S\setminus \{(x_i,y_j)\})$. 
This proves that $S$ is a $C$-independent set in $G\Box H$ with cardinality $r+s-1$.

\medskip

In the following, we prove that $\car(G\Box H)\geq \car(G) + \car(H) - 1$. 
Let $S=\{g_1,g_2,\ldots ,g_r\}$ and 
$T=\{h_1,h_2,\ldots,h_t\}$ 
be maximum $C$-independent sets in $G$ and $H$, respectively.
Choose $g\in \hullc (S)$ such that 
$g\notin\displaystyle \bigcup_{i=1}^r \hullc (S \setminus \{g_i\})$, and choose 
$h\in \hullc (T)$ such that 
$h\notin\displaystyle \bigcup_{j=1}^t \hullc (T \setminus \{h_j\})$.
We define the set $
U=(S\times\{h_1\})\, \cup \,(\{g_r\}\times T)$. Then both 
$\hullc (S \times \{h_1\}) \subseteq \hullc (U )$ and 
$\hullc (\{g_r\} \times T) \subseteq \hullc (U)$ hold. 
Since $\hullc (S \times \{h_1\}) \cup \hullc (\{g_r\} \times T )$ 
is connected in $G \Box H$, we can conclude that $\hullc (U)$ is also connected in $G \Box H$. Now, since $(g,h_1)$ and $(g_r,h)$ belong to $\hullc (U)$, it follows from Lemma~\ref{lem:subproduct} that $(g,h)$ also belongs to $\hullc (U)$.

\medskip
\noindent
{\bf Claim:} $(g,h)\notin \displaystyle\bigcup_{(g',h')\in U} \hullc (U\setminus \{(g',h')\})$.\\
 Let $(g',h') \in U$ be arbitrary. 
We first consider the case when $g' = g_i$ for some $1 \leq i < r$, in which case $h' = h_1$. 
By the given assumption, it follows that $
g \notin \hullc (S \setminus \{g'\}).
$
Let $C$ denote the connected component of the induced subgraph 
$\hullc (S \setminus \{g'\})$ that contains $g_r$. Then we have
\[
\hullc (U \setminus \{(g',h')\}) 
= \big( (\hullc (S \setminus \{g'\}) \setminus V(C)) \times \{h_1\} \big)
\cup \big( V(C) \times \hullc (T) \big).
\]
Consequently, for any $y \in V(H)$, it follows that 
$(g,y) \notin \hullc (U \setminus \{(g',h')\})$, and in particular,
$
(g,h) \notin \hullc (U \setminus \{(g',h')\}).
$
A similar argument applies to the case when $g' = g_r$ and $h' = h_j$ for some $1 < j \leq t$, which yields $
(g,h) \notin \hullc (U \setminus \{(g',h')\}).$

Next, we consider the case when $(g',h') = (g_r,h_1)$. In this situation, we obtain
\[\hullc (U \setminus \{(g',h')\}) \subseteq \hullc (S\setminus \{g_r\})\times \hullc (T\setminus \{h_1\})
\]
It follows once again that
$
(g,h) \notin \hullc (U \setminus \{(g',h')\}).
$
Therefore,
$
(g,h) \notin \displaystyle\bigcup_{(g',h') \in U} \hullc (U \setminus \{(g',h')\}).$
Hence, $U$ is a $C$-independent set in $G \Box H$. This completes the proof.
\end{proof}
Next, we determine the exact value of $\car(P_m \,\Box\, P_n)$ and this proves the sharpness of the lower bound in Theorem~\ref{thm:lbound}.
\begin{lemma}\label{lem:maxc-minh}
   For $m \geq n \geq 2$, every maximal $C$-independent set of $P_m \Box P_n$ is a minimal hull set of $P_m \Box P_n$.

\begin{proof}
We first consider the case $n=2$ and prove the result by induction on $m$. 
The statement is straightforward to verify for $m=2$. 
Assume, as the induction hypothesis, that every maximal $C$-independent set of $P_k \Box P_2$ is a minimal hull set of $P_k \Box P_2$. 
Let $S$ be a maximal $C$-independent set of $P_{k+1} \Box P_2$. 
Fix the paths $P_{k+1} : x_1, x_2, \ldots, x_{k+1}$ and $P_2 : y_1, y_2$, and let $G = P_{k+1} \Box P_2$. 
Suppose that $S \subseteq V(P_k \Box P_2)$. 
Since $P_k \Box P_2$ is a convex subgraph of $G$, it follows that $S$ is a maximal $C$-independent set of $P_k \Box P_2$. 
Hence, by the induction hypothesis, $S$ is a minimal hull set of $P_k \Box P_2$. 
Recall that the complement of each layer of $P_k \Box P_2$ is convex in $P_k \Box P_2$. 
This implies that $S$ intersects every layer of $P_k \Box P_2$. 
Without loss of generality, assume that $(x_1, y_1) \in S$. 
Moreover, either $(x_k, y_1) \in S$ or $(x_k, y_2) \in S$.

We first consider the case where $(x_k, y_1) \in S$. 
Let $j$ be the largest index such that $1 \leq j \leq k$ and $(x_j, y_2) \in S$. 
Since the subgraph induced by $S$ is connected, it follows from Lemma~\ref{path_cycle} that $
(x_k, y_2) \in \hullc (S \setminus \{(x_k, y_2)\}).
$
Therefore, $(x_k, y_2) \notin S$, since $S$ is $C$-independent. 
Consequently, we must have $j \leq k-1$.
We now show that $j=1$. 
Suppose, for a contradiction, that $j>1$. 
Since the subgraph induced by $S$ is connected in $P_k \Box P_2$, there exist three distinct paths $Q_1, Q_2,$ and $Q_3$ joining the vertices $(x_1, y_1)$, $(x_j, y_2)$, and $(x_k, y_1)$ pairwise. 
Consequently, Lemma~\ref{path_cycle} implies that $S$ is $C$-dependent, contradicting the assumption that $S$ is $C$-independent. 
Hence, $j=1$. Thus, as the subgraph induced by $S$ is connected in $P_k \Box P_2$, we must have
$S=\{(x_1,y_2),(x_1,y_1),(x_2,y_1),\ldots,(x_k,y_1)\}.$
This further implies that the set $S \cup \{(x_{k+1}, y_1)\}$ is $C$-independent in $P_{k+1} \Box P_2$, contradicting the maximality of $S$.

Thus, $S$ must intersect $\{x_{k+1}\}\times V(P_2)$ and, similarly, $\{x_1\}\times V(P_2)$. 
Since $S$ induces a connected subgraph, it follows that $S \cap (\{x_i\}\times V(P_2)) \neq \emptyset$ for all $i \in [k+1]$. This in turn implies that $S$ is a hull set of $P_{k+1} \Box P_2$. 
Moreover, since $S$ is $C$-independent, it follows that $S$ is a minimal hull set of $P_{k+1} \Box P_2$.

Next, consider the case where $(x_k, y_2) \in S$. 
Since the subgraph induced by $S$ is connected in $P_k \Box P_2$, it follows that either $(x_1, y_2) \in S$ or $(x_k, y_1) \in S$. If $(x_1, y_2) \in S$, then, by an argument analogous to the previous case, we obtain $
S=\{(x_1,y_1),(x_2,y_1),(x_2,y_2),\ldots,(x_k,y_2)\}.$
In this case, $S \cup \{(x_{k+1}, y_2)\}$ forms a strictly larger $C$-independent set, contradicting the maximality of $S$.
Hence, $(x_k, y_1) \in S$. 
Thus, $S=\{(x_1,y_1),(x_2,y_1),\ldots,(x_k,y_1),(x_k,y_2)\}.$
Now consider the set
$S'=\{(x_1,y_1),(x_1,y_2),(x_2,y_2),\ldots,(x_k,y_2)\}.$
Then $|S'|=|S|$, and $S'$ is also a $C$-independent set of $P_k \Box P_2$, contradicting the conclusion established above. 

Thus, as in the first case, $S$ must intersect every layer of $P_{k+1} \Box P_2$, which implies that $S$ is a hull set of $P_{k+1} \Box P_2$. 
Since $S$ is $C$-independent, it follows that $S$ is a minimal hull set of $P_{k+1} \Box P_2$. 

Hence, by induction, every maximal $C$-independent set of $P_n \Box P_2$ is a minimal hull set of $P_n \Box P_2$ for all $n \geq 2$.

We now consider the case $n \geq 3$. We show that every maximal $C$-independent set of $P_m \Box P_n$ is a minimal hull set of $P_m \Box P_n$ for all $m \geq n \geq 3$.
Assume, for a contradiction, that the result fails. 
Let $n$ be the minimum integer such that there exists a maximal $C$-independent set $S$ of $P_m \Box P_n$ which is not a hull set. 
Then necessarily $n \geq 3$. Fix the paths $P_m : x_1, x_2, \ldots, x_m$ and $P_n : y_1, y_2, \ldots, y_n$, and let $G = P_m \Box P_n$. 
Since the subgraph induced by $S$ is connected (by Lemma~\ref{path_cycle}), we may assume, without loss of generality, that $S \subseteq V(P_m \Box P_{n-1}).$
Since $P_m \Box P_{n-1}$ is a convex subgraph of $G$, it follows that $S$ is a maximal $C$-independent set of $P_m \Box P_{n-1}$. 
By the minimal choice of $n$, it follows that $S$ is a minimal hull set of $P_m \Box P_{n-1}$. 
In particular, $S$ intersects every layer of $P_m \Box P_{n-1}$.
Let $i$ be the smallest index such that $(x_i, y_1) \in S$, $j$ the largest index such that $(x_j, y_n) \in S$, $r$ the smallest index such that $(x_1, y_r) \in S$, and $s$ the largest index such that $(x_m, y_s) \in S$.
We first show that $i = r = 1$. 
Suppose, to the contrary, that $i \neq r$. 
Let $P_1$ be a $(x_i, y_1),(x_1, y_r)$-path, $P_2$ a $(x_i, y_1),(x_m, y_s)$-path, $P_3$ a $(x_1, y_r),(x_j, y_n)$-path, and $P_4$ a $(x_j, y_n),(x_m, y_s)$-path in the subgraph induced by $S$.
Then, by Lemma~\ref{path_cycle},
$\displaystyle\bigcup_{c=1}^4 \hullc (V(P_c)) = \hullc (S),$
which implies that $S$ is $C$-dependent, a contradiction. 
Hence, $i = r = 1$. 
Proceeding as above, we obtain $j = m$. Therefore, the subgraph induced by $S$ in $P_m \Box P_{n-1}$ is a path joining $(x_1, y_1)$ and $(x_m, y_{n-1})$. However, in this case, $S \cup \{(x_m, y_n)\}$ is a $C$-independent set of $P_m \Box P_n$, contradicting the maximality of $S$.
Thus, every maximal $C$-independent set of $P_m \Box P_n$ is a minimal hull set of $P_m \Box P_n$. 
This completes the proof.
 \end{proof}
\end{lemma}

\begin{theorem}
   For $m \geq n \geq 2$, we have $\car(P_m \Box P_n) = m + n - 1$.
    \begin{proof}
       Fix the paths $P_m : x_1, x_2, \ldots, x_m$ and $P_n : y_1, y_2, \ldots, y_n$, and consider the set
$T=\{(x_1,y_1),(x_2,y_1),\ldots,(x_m,y_1)\}
\cup
\{(x_1,y_1),(x_1,y_2),\ldots,(x_1,y_n)\}.
$
Then $T$ is a $C$-independent set of size $m+n-1$. 
Hence, $\car(G)\geq m+n-1$.

Conversely, let $S$ be a maximum $C$-independent set of $P_m \Box P_n$. 
Then $S$ is maximal, and by Lemma~\ref{lem:maxc-minh}, it forms a minimal hull set of $P_m \Box P_n$. 
Hence, $S$ intersects every layer of $P_m\Box P_n$, 
since the complement of each layer in $P_m \Box P_n$ is convex.
Since $S$ is $C$-independent, it follows, as in the proof of the preceding lemma, 
that the subgraph of $P_m \Box P_n$ induced by $S$ is a path $P$. 
Without loss of generality, we may assume that $S$ induces a path from $(x_1,y_1)$ to $(x_m,y_n)$ in $P_m \Box P_n$.
Since $S$ is $C$-independent, Lemma~\ref{path_cycle} ensures that $
S \;\cap\; \bigl(V(P_m \setminus \{x_1,x_m\}) \times V(P_n \setminus \{y_1,y_n\})\bigr) = \varnothing.$
These observations together yield that $S$ must be the union of a single $x$-layer and a single $y$-layer. 
Hence, $|S| \leq m + n - 1.$
 \end{proof}
\end{theorem}

\section{Final Considerations}
\label{sec:conclusions}

In this paper, we have investigated the Carath\'{e}odory number in cycle convexity. Given a graph $G$ and a positive integer $k$, we have shown that deciding whether $\car(G) \geq k$ is \NP-complete even for bipartite graphs. This motivates further investigation of parameterized and approximation aspects of the problem.

On the other hand, we have established constant upper bounds and exact values for several graph classes, including forests, cycles, complete, complete multipartite, split, and $P_4$-sparse graphs. We also characterized graphs $G$ with values close to the maximum, namely those with $\car(G)=n-1$ and $\car(G)=n-2$, where $n = |V(G)|$. As a direction for future work, it would be natural to extend these results to broader graph classes such as chordal graphs (which generalize split graphs) and other classes with few $P_4$’s. The $P_4$-tidy in particular, a superclass of $P_4$-sparse graphs, may contain induced $C_5$, then it would be interesting to determine whether a larger constant upper bound holds in this setting, possibly contrasting with the bound of $3$ for $P_4$-sparse graphs.

In addition, we studied the behavior of the Carath\'{e}odory number under graph products, obtaining bounds and exact values for the strong, lexicographic, and Cartesian products. 
It would be interesting to investigate the parameter under other graph products, such as the direct product, as well as to establish equality results for other classes of factors, extending the case of products of paths considered in this paper.

\section*{Acknowledgements}

Revathy S.  Nair acknowledges the financial support from the University of Kerala, for providing the University Junior Research Fellowship (Ac EVI 3217/2025/UOK dated 10/04/2025).\\

\bibliographystyle{amsplain}
\bibliography{cycle}

@article{coelho2019p3,
title = {{On the $P_3$-hull number of some products of graphs}},
journal = {Discrete Applied Mathematics},
volume = {253},
pages = {2--13},
year = {2019},
issn = {0166-218X},
doi = {https://doi.org/10.1016/j.dam.2018.04.024},
url = {https://www.sciencedirect.com/science/article/pii/S0166218X18302440},
author = {Erika M. M. Coelho and Hebert Coelho and Julliano R. Nascimento and Jayme L. Szwarcfiter}
}

@article{dreyer2009irreversible,
  title={{Irreversible $k$-threshold processes: Graph-theoretical threshold models of the spread of disease and of opinion}},
  author={Dreyer Jr, Paul A and Roberts, Fred S},
  journal={Discrete Applied Mathematics},
  volume={157},
  number={7},
  pages={1615--1627},
  year={2009},
  publisher={Elsevier}
}

@article{interval08,
  title={On interval number in cycle convexity},
  author={Araujo, Julio and Ducoffe, Guillaume and Nisse, Nicolas and Suchan, Karol},
  journal={Discrete Mathematics \& Theoretical Computer Science},
  volume={20},
  number={Graph Theory},
  year={2018},
  publisher={Episciences. org}
}

@article{hull09,
  title={On the hull number on cycle convexity of graphs},
  author={Araujo, Julio and Campos, Victor and Gir{\~a}o, Darlan and Nogueira, Jo{\~a}o and Salgueiro, Ant{\'o}nio and Silva, Ana},
  journal={Information Processing Letters},
  volume={183},
  pages={106420},
  year={2024},
  publisher={Elsevier}
}

@book{van22,
  title={Theory of convex structures},
  author={van De Vel, Marcel L.J.},
  year={1993},
  publisher={Elsevier}
}

@book{araujo2025introduction,
  title={Introduction to Graph Convexity: An Algorithmic Approach},
  author={Ara{\'u}jo, J{\'u}lio and Dourado, Mitre C and Protti, F{\'a}bio and Sampaio, Rudini M},
  year={2025},
  publisher={Springer Nature}
}

@article{bijo3,
  title={{On the $\Delta$-interval and the $\Delta$-convexity numbers of graphs and graph products}},
  author={Anand, Bijo S and Dourado, Mitre C and Narasimha-Shenoi, Prasanth G and Ramla, Sabeer S},
  journal={Discrete Applied Mathematics},
  volume={319},
  pages={487--498},
  year={2022},
  publisher={Elsevier}
}

@article{bijo2,
  title={{Computing the hull number in $\Delta$-convexity}},
  author={Anand, Bijo S and Anil, Arun and Changat, Manoj and Dourado, Mitre C and Ramla, Sabeer S},
  journal={Theoretical Computer Science},
  volume={844},
  pages={217--226},
  year={2020},
  publisher={Elsevier}
}

@inproceedings{bijo1,
  title={{$\Delta$-Convexity Number and $\Delta$-Number of Graphs and Graph Products}},
  author={Anand, Bijo S and Narasimha-Shenoi, Prasanth G and Ramla, Sabeer Sain},
  booktitle={Conference on Algorithms and Discrete Applied Mathematics},
  pages={209--218},
  year={2020},
  organization={Springer}
}

@article{source9,
  title={Steiner distance and convexity in graphs},
  author={C{\'a}ceres, Jos{\'e} and M{\'a}rquez, Alberto and Puertas, Mar{\'\i}a Luz},
  journal={European Journal of Combinatorics},
  volume={29},
  number={3},
  pages={726--736},
  year={2008},
  publisher={Elsevier}
}

@article{source11,
  title={{Graphs with few $P_4$’s under the convexity of paths of order three}},
  author={Campos, Victor and Sampaio, Rudini M and Silva, Ana and Szwarcfiter, Jayme L},
  journal={Discrete Applied Mathematics},
  volume={192},
  pages={28--39},
  year={2015},
  publisher={Elsevier}
}

@article{source17,
  title={Complexity results related to monophonic convexity},
  author={Dourado, Mitre C and Protti, F{\'a}bio and Szwarcfiter, Jayme L},
  journal={Discrete Applied Mathematics},
  volume={158},
  number={12},
  pages={1268--1274},
  year={2010},
  publisher={Elsevier}
}

@article{source20,
  title={{Convex sets in graphs, II. Minimal path convexity}},
  author={Duchet, Pierre},
  journal={Journal of Combinatorial Theory, Series B},
  volume={44},
  number={3},
  pages={307--316},
  year={1988},
  publisher={Elsevier}
}

@inproceedings{duchet1987convexity,
  title={Convexity in combinatorial structures},
  author={Duchet, Pierre},
  booktitle={Proceedings of the 14th Winter School on Abstract Analysis},
  pages={261--293},
  year={1987},
  organization={Circolo Matematico di Palermo}
}

@techreport{araujo2025on,
  TITLE = {{On the rank and the general position number in cycle convexity}},
  AUTHOR = {Ar{\'a}ujo, J{\'u}lio and Ar{\'a}ujo, Samuel N and Medeiros, Pedro P and Nisse, Nicolas and Silva, Caroline},
  URL = {https://inria.hal.science/hal-05378038},
  INSTITUTION = {{Inria \& Universit{\'e} Cote d'Azur, CNRS, I3S, Sophia Antipolis, France}},
  YEAR = {2025},
  HAL_ID = {hal-05378038},
  HAL_VERSION = {v1},
}

@article{gomes2025some,
  title={Some complexity results on cycle-convex partitions},
  author={Gomes, Guilherme CM and Lopes, Laila MV and dos Santos, Vinicius F},
  journal={Procedia Computer Science},
  volume={273},
  pages={365--372},
  year={2025},
  publisher={Elsevier}
}

@article{dourado2013caratheodory,
  title={On the Carath{\'e}odory number of interval and graph convexities},
  author={Dourado, Mitre C and Rautenbach, Dieter and Dos Santos, Vin{\'\i}cius Fernandes and Sch{\"a}fer, Philipp M and Szwarcfiter, Jayme L},
  journal={Theoretical Computer Science},
  volume={510},
  pages={127--135},
  year={2013},
  publisher={Elsevier}
}

@article{coelho2014caratheodory,
  title={{The Carath{\'e}odory number of the $P_3$ convexity of chordal graphs}},
  author={Coelho, Erika MM and Dourado, Mitre C and Rautenbach, Dieter and Szwarcfiter, Jayme L},
  journal={Discrete Applied Mathematics},
  volume={172},
  pages={104--108},
  year={2014},
  publisher={Elsevier}
}

@phdthesis{hoang1985perfect,
  author  = {C. T. Ho{\`a}ng},
  title   = {Perfect graphs},
  school  = {School of Computer Science, McGill University},
  address = {Montreal},
  year    = {1985}
}

@article{jamison1992recognizing,
  title={{Recognizing $P_4$-sparse graphs in linear time}},
  author={Jamison, Beverly and Olariu, Stephan},
  journal={SIAM Journal on Computing},
  volume={21},
  number={2},
  pages={381--406},
  year={1992},
  publisher={SIAM}
}

@inproceedings{caceres-2005,
  title={On monophonic sets in graphs},
  author={C{\'a}ceres, J and Hernando, C and Mora, M and Puertas, M and Seara, C},
  booktitle={20th British Combinatorial Conference, Durham-England},
  year={2005}
}

@article{centeno2011irreversible,
  title={Irreversible conversion of graphs},
  author={Centeno, Carmen C and Dourado, Mitre C and Penso, Lucia Draque and Rautenbach, Dieter and Szwarcfiter, Jayme L},
  journal={Theoretical Computer Science},
  volume={412},
  number={29},
  pages={3693--3700},
  year={2011},
  publisher={Elsevier}
}

@book{buckley-1990,
  title={Distances in Graphs},
  author={F Buckley, F Harary},
  year={1990},
  publisher={Eddison Wesley, Redwood City CA}
}

@article{farber-1986,
  title={Convexity in graphs and hypergraphs},
  author={Farber, Martin and Jamison, Robert E},
  journal={SIAM Journal on Algebraic Discrete Methods},
  volume={7},
  number={3},
  pages={433--444},
  year={1986},
  publisher={SIAM}
}

@article{everett1985hull,
  title={The hull number of a graph},
  author={Everett, Martin G and Seidman, Stephen B},
  journal={Discrete Mathematics},
  volume={57},
  number={3},
  pages={217--223},
  year={1985},
  publisher={Elsevier}
}

@article{araujo2020cycle,
  title={Cycle convexity and the tunnel number of links},
  author={Ara{\'u}jo, J{\'u}lio and Campos, Victor and Gir{\~a}o, Darlan and Nogueira, Jo{\~a}o and Salgueiro, Ant{\'o}nio and Silva, Ana},
  journal={arXiv preprint arXiv:2012.05656},
  year={2020},
  pages={1--28}
}

@article{lima2024complexity,
      title={The complexity of convexity number and percolation time in the cycle convexity}, 
      author={Carlos V G C Lima and Thiago Marcilon and Pedro P Medeiros},
      journal={arXiv preprint arXiv:2404.09236},
      year={2024},
      pages={1--24}
}

@book{garey1979computers,
  title={Computers and intractability},
  author={Garey, Michael R and Johnson, David S},
  volume={174},
  year={1979},
  publisher={Freeman San Francisco}
}

@article{barbosa2012caratheodory,
  title={On the Carath{\'e}odory number for the convexity of paths of order three},
  author={Barbosa, Rommel M and Coelho, Erika MM and Dourado, Mitre C and Rautenbach, Dieter and Szwarcfiter, Jayme L},
  journal={SIAM Journal on Discrete Mathematics},
  volume={26},
  number={3},
  pages={929--939},
  year={2012},
  publisher={SIAM}
}

@article{anand2025complexity,
  title={Complexity and structural results for the hull and convexity numbers in cycle convexity for graph products},
  author={Anand, Bijo S and SV, Ullas Chandran and Nascimento, Julliano R and Nair, Revathy S},
  journal={Discrete Applied Mathematics},
  volume={377},
  pages={552--561},
  year={2025},
  publisher={Elsevier}
}

@article{anand2025caratheodory,
  title={{Carath{\'e}odory number and exchange number in $\Delta$-convexity}},
  author={Anand, Bijo S and Anil, Arun and Changat, Manoj and Narasimha-Shenoi, Prasanth G and Ramla, Sabeer S},
  journal={J. Combin. Math. Combin. Comput},
  volume={126},
  number={11},
  pages={27},
  year={2025}
}

\end{document}